\tikzset{block/.style={draw, rectangle,rounded corners, line width=1pt, minimum height=3em, minimum width=3em, outer sep=0pt}}
\tikzset{sumcircle/.style={draw, circle, outer sep=0pt, label=center:{$\sum$}, minimum width=2em}}
\DeclareMathOperator{\Tr}{Tr}
\newtheorem{theorem}{Theorem}
\newtheorem{remark}{Remark}
\newcommand{\ubar}[1]{\underaccent{\bar}{#1}}
\title{\LARGE \bf
On Event-Based Sampling for LQG-Optimal Control
}
\author{Marcus Thelander Andr\'{e}n, Bo Bernhardsson, Anton Cervin and Kristian Soltesz
\thanks{This work has been supported by the Swedish Research Council. 
	The authors are with the Department of Automatic Control, Lund University, Sweden, and are members of the LCCC Linnaeus Center and the ELLIIT Excellence Center at Lund University.
        {e-mail: \tt\small \{marcus, bob, anton, kristian\}@control.lth.se}}%
        }
\begin{document}

\maketitle
\thispagestyle{empty}
\pagestyle{empty}


\begin{abstract}
We consider the problem of finding an event-based sampling scheme that optimizes the trade-off between average sampling rate and control performance in a linear-quadratic-Gaussian (LQG) control problem setting with output feedback. 
Our analysis is based on a recently presented sampled-data controller structure, which remains LQG-optimal for any choice of sampling scheme.
We show that optimization of the sampling scheme is related to an elliptic convection--diffusion type partial differential equation over a domain with free boundary, a so called Stefan problem.
A numerical method is presented to solve this problem for second order systems, and thus obtain an optimal sampling scheme.
The method also directly generalizes to higher order systems, although with a higher computational cost.
For the special case of multidimensional integrator systems, we present the optimal sampling scheme on closed form, and prove that it will always outperform its periodic counterpart.
Tight bounds on the improvement are presented. 
The improved performance is also demonstrated in numerical examples, both for an integrator system and a more general case.
\end{abstract}
\medskip
\begin{keywords}Event-based sampling, LQG-optimal control, sampled-data control, linear reset systems \end{keywords}

\section{INTRODUCTION}
Sampled-data control has a long history of using zero-order hold (ZOH) actuation and periodic sampling, with a well-established theoretical framework \cite{astwitt97}. 
However, system resources such as network bandwidth, energy and computational power associate sampling and actuation with a cost.
The question then arises how to make more resource-efficient control implementations.
This is where the field of event-based control has emerged, where sampling and actuation are triggered only when controlled variables deviate significantly from their setpoints.
Early pioneering work in \cite{kjbob99} and \cite{arzen99} demonstrated the potential of event-based control, and it has since been a field of much activity \cite{ast08,heemels12,liu14,miskowicz15}.

In event-based control, both the intersample behavior of the control signal and the sampling scheme are considered parts of the design problem.
For a closed-loop system of the form in Fig.~\ref{fig:sys}, this corresponds to co-design of the sampler $\mathcal{S}$, the hold circuit $\mathcal{H}$, the discrete-time controller $\bar{K}$ and the sequence of sampling times $\{t_i\}_{i\in \mathbb{N}_0}$. 
Optimal co-design of these components is considered a difficult task, even for the well-studied linear-quadratic-Gaussian (LQG) control problem.
Many previous works have therefore focused on sub-optimal solutions, several of which are shown to outperform their periodic counterparts \cite{antunes12,araujo+14,antunes16,gatsis+16}.
However, these works do not consider optimization of the full co-design and typically assume setups with periodically synchronized sampling decisions and full state information.
To the authors' knowledge, there has still not been presented an optimal event-based co-design of controller and sampling scheme for the continuous-time LQG-problem with output feedback.

Recently, however, a $\mathcal{H}_2$-optimal controller structure was presented in \cite{mir17}, which was shown to be optimal for any given, uniformly bounded, sequence of sampling times.
This remarkable result was applied to event-based control in \cite{braksmayer_mir} and \cite{mir_cdc}, where in the latter it was proven that the structure is still LQG-optimal when the sampling times depend on the controlled variables.
This implies that the co-design problem is separable, and that it remains to find the sampling scheme that optimizes the performance for a given cost on sampling effort, e.g., the average sampling rate.  
In the considered setup in Fig. \ref{fig:sys}, the sampling effort represents costly data transmissions from the sensor to actuator side.

\begin{figure}[tb]
	\centering
		\begin{tikzpicture}[auto, line width=1pt, >=narrow]
	\node[block,minimum height=4em, minimum width=4em](G){$G$};
	\node[block,below=10mm of G](K){$\bar{K}$};
	\node[block,left=9mm of K](S){$\mathcal{S}$};
	\node[block,right=9mm of K](H){$\mathcal{H}$};
	\node[coordinate,right=13mm of $(G.south east)!0.25!(G.north east)$](uend){};
	\node[coordinate,left=28mm of $(G.south west)!0.75!(G.north west)$](zend){};
	\node[coordinate,right=28mm of $(G.south east)!0.75!(G.north east)$](wstart){};
	\node[coordinate,left=23mm of $(G.south west)!0.25!(G.north west)$](yturn){};
	\node[coordinate,right=23mm of $(G.south east)!0.25!(G.north east)$](uturn){};

	\draw[dotted,->](S)--node[above]{}(K);
	\draw[dotted,->](K)--node[above]{}(H);
	\draw[-]($(G.south west)!0.25!(G.north west)$)-|node[above,xshift=10mm]{$y$}(yturn);
	\draw[->](yturn)|-(S.west);
	\draw[-](H.east)-|(uturn);
	\draw[->](uturn)|-node[above,xshift=-10mm]{$u$}($(G.south east)!0.25!(G.north east)$);
	\draw[->]($(G.south west)!0.75!(G.north west)$)|-node[above,xshift=-13mm]{$z$}(zend);
	\draw[->](wstart)|-node[above,xshift=-15mm]{$w$}($(G.south east)!0.75!(G.north east)$);
	\end{tikzpicture}
	\caption{An LTI system $G$ in feedback with a sampled-data controller consisting of a sampler $\mathcal{S}$, a hold circuit $\mathcal{H}$ and a discrete-time controller $\bar{K}$. Solid lines represent continuous-time signals, whereas dashed lines represent discrete-time signals.}
	\label{fig:sys}
\end{figure}
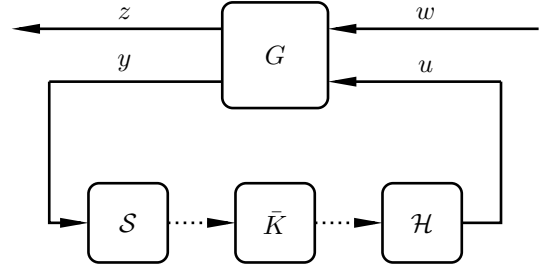

The results in \cite{mir17} also show that the closed-loop performance is fundamentally determined by a linear reset system.
Reset systems in event-based control have been considered in several works, e.g. \cite{kjbob99,meng12,toivo12}, and the results in \cite{mir17} confirm that they play an important role.
In \cite[Paper I \& II]{toivo12}, a general framework for optimal event-based impulse control is developed, and we show that this framework is key in solving the optimal sampling problem.

Our contribution is to derive the optimal event-based sampling scheme for the controller structure in \cite{mir17} using the framework in \cite{toivo12}. 
We show that the optimal sampling problem is equivalent to solving a stationary, elliptic, partial-differential equation (PDE) with free boundary, for which numerical solution methods can be applied.
For general second-order systems, we present a finite-difference method for obtaining the optimal sampling scheme numerically. 
The proposed method also directly generalizes to higher-order systems, although at a higher computational cost.
For the special case of $n$-dimensional integrator systems, we present a closed-form solution to the optimal sampling problem. 
The performance ratio between periodic and optimal event-based sampling is then shown to lie in the interval $[1\!+\!\frac{2}{n},\,3)$.


\section{PROBLEM FORMULATION}
\label{sec:problem_setup}
We consider the problem of finding the optimal sampling scheme for the closed-loop system in Fig. \ref{fig:sys}.
Here, $G$ is a linear time-invariant (LTI) continuous-time system, with an $n$-dimensional realization of the form
\begin{align}
\begin{split}
G:
\begin{cases}
\dot{x}(t)= Ax(t) + B_ww(t) + B_uu(t),\\
z(t)= C_zx(t) + D_{zu}u(t),\\
y(t)= C_yx(t) + D_{yw}w(t).
\end{cases}
\end{split}
\label{eq:sys}
\end{align}
The system $G$ is assumed to satisfy the standard assumptions for the output feedback $\mathcal{H}_2$ problem in \cite[Sec. 14.5]{zhou96} (but here we do not assume $D_{zu}$ and $D_{yw}$ to be normalized).
The inputs to $G$ are the control signal $u$ and unit-intensity white noise $w$.
The outputs are the measurement $y$ and the controlled signal $z$. 

We define the following infinite-horizon LQG cost as the measure of control performance:
\begin{align}
    J_z &\coloneqq \limsup_{T\rightarrow\infty}\frac{1}{T}\mathbb{E}[\int_0^Tz(t)^\intercal z(t)dt].
    \label{eq:z_var}
\end{align}
In the case of a linear closed-loop system, \eqref{eq:z_var} can be viewed as the squared $\mathcal{H}_2$ norm.
The average sampling rate of the system is defined as
\begin{align}
f &\coloneqq  \limsup_{T\rightarrow\infty}\frac{1}{T}\mathbb{E}[\sum_{i=0}^{\infty}1_{t_i\leq T}],
\label{eq:average_rate}
\end{align}
where the sum counts the number of sampling times up to time $T$.
The objective is to solve the optimization problem
\begin{align}
	\min_{\{t_i\}}(J_z + \rho f),
	\label{eq:opt_goal}
\end{align}
where $\rho\geq 0$ is the per-sample cost, which governs the trade-off between sampling rate and performance. 
The optimization \eqref{eq:opt_goal} is performed over the sequence of sampling times, $\{t_i\}_{i\in\mathbb{N}_0}$, based on knowledge of $y(t)$ for $t\leq t_i$ alone. 

The sampled-data controller structure $K=\mathcal{H}\bar{K}\mathcal{S}$ will be fixed to the optimal structure from \cite{mir17}, which minimizes \eqref{eq:z_var} for any sequence $\{t_i\}$.
It will be reviewed next.


\subsection{The Optimal Controller Structure}
With no cost of sampling (i.e. $\rho = 0$), the controller that minimizes $J_z$ is the continuous-time LQG controller \cite[Ch. 14]{zhou96}, with the realization 
\begin{align}
    \begin{split}
    \begin{cases}
        \dot{\hat{x}}(t) &= (A+B_uF+LC_y)\hat{x}(t) -Ly(t),\\
        u(t) &= F\hat{x}(t),
    \end{cases}    
    \end{split}
    \label{eq:lqg}
\end{align}
where $F$ and $L$ are computed by solving the two algebraic Riccati equations
\begin{align*}
    &\begin{cases}
        A^\intercal X+ XA + C_z^\intercal C_z - F^\intercal (D_{zu}^\intercal D_{zu})F = 0,\\
        F = -(D_{zu}^\intercal D_{zu})^{-1}(B_u^\intercal X + D_{zu}^\intercal C_z),
    \end{cases}
    \\
    &\begin{cases}
        AY+ YA^\intercal + B_wB_w^\intercal - L(D_{yw}D_{yw}^\intercal)L^\intercal = 0,\\
        L = -(YC_y^\intercal + B_wD_{yw}^\intercal)(D_{yw}D_{yw}^\intercal)^{-1}.
    \end{cases}
\end{align*}
The minimum cost, $\gamma_0 \coloneqq \min J_z$, is then \cite[Thm. 14.7]{zhou96}
\begin{align}
    \gamma_0 = \Tr(B_w^\intercal XB_w) + \Tr(C_zYC_z^\intercal) + \Tr(XAY + YA^\intercal X).
     \label{eq:gamma2}
\end{align}
Since sampling will always lead to some performance degradation, $\gamma_0$ is the lower bound on $J_z$ for any sampled-data controller structure $K$.
The optimal structure is given by the following theorem:

\begin{theorem}[{\cite[Thm. 5.1]{mir17}} \& {\cite[Thm. 1]{mir_cdc}}]
\label{th:opt_perf}
The optimal achievable performance for the system in Fig. \ref{fig:sys} among all causal sampled-data controllers $K$ for any uniformly bounded sampling sequence $\{t_i\}$, possibly dependent on controlled variables, is
\begin{equation}
     \min_K J_z = \gamma_0 + \limsup_{T\rightarrow\infty}\frac{1}{T}\mathbb{E}[\int_0^T\eta(t)^\intercal \eta(t)dt],
     \label{eq:optimal_cost}
\end{equation}
where $\eta$ is the output of a linear reset system $H$
\begin{align}
    \begin{split}    
    H:\begin{cases}
    \dot{x}_{\textsc{\tiny H}}(t) &= Ax_{\textsc{\tiny H}}(t) + \epsilon(t),\quad\quad\quad  x_{\textsc{\tiny H}}(t_i) = 0,\\
			\eta(t) &= (D_{zu}^\intercal D_{zu})^{\frac{1}{2}}Fx_{\textsc{\tiny H}}(t).    
\end{cases}
\end{split}
\label{eq:h_sys} 
\end{align}   
The input $\epsilon = L(y-C_yx_s)$ is the measurement update of the Kalman--Bucy filter 
\begin{align}
    \dot{x}_s(t) &= Ax_s(t) + B_uu(t) - L(y(t) - C_yx_s(t)),
    \label{eq:kalman}
\end{align}
operating on the sensor-side measurement $y$. Furthermore, $x_{\textsc{\tiny H}} = x_a-x_s$, where $x_a$ is the state vector of an intermittently reset LQR controller simulating the closed loop on the actuator side,
\begin{align}
\begin{split}
\begin{cases}
    \dot{x}_a(t) &= (A+B_uF)x_a(t),\quad\quad x_a(t_i) = x_s(t_i), \\
    u(t) &= Fx_a(t).
\end{cases}
\end{split}
    \label{eq:lqr}
\end{align}
\end{theorem}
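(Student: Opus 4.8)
The plan is to establish the cost identity by a certainty-equivalence decomposition combined with a completion-of-squares argument, and then to prove that the resulting structure is optimal through the estimation/control separation together with a matching lower bound. First I would note that, since the control $u = Fx_a$ is causally computable by the controller, the Kalman--Bucy filter \eqref{eq:kalman} driven by $y$ and $u$ reproduces the conditional mean $x_s(t) = \mathbb{E}[x(t)\mid \mathcal{Y}_t]$, where $\mathcal{Y}_t$ is the $\sigma$-algebra generated by the measurements up to time $t$; this is the minimum-variance estimator regardless of the (adapted) control law. The estimation error $\tilde{x} = x - x_s$ is then orthogonal, in the $L^2$ sense, to every $\mathcal{Y}_t$-measurable quantity, which is the fact that will eventually annihilate the cross terms.

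Next I would derive the dynamics of the reset system $H$ directly. Writing $x_H = x_a - x_s$ and differencing \eqref{eq:lqr} against \eqref{eq:kalman} with $u = Fx_a$ substituted, the $B_uFx_a$ terms cancel and one obtains $\dot{x}_H = Ax_H + L(y - C_yx_s) = Ax_H + \epsilon$, with the reset $x_H(t_i) = x_a(t_i) - x_s(t_i) = 0$ built in by \eqref{eq:lqr}; this matches \eqref{eq:h_sys} exactly and identifies $\epsilon$ as the measurement-update term of the filter. The controlled output then splits as $z = C_z x + D_{zu}Fx_a = (C_zx_s + D_{zu}Fx_s) + C_z\tilde{x} + D_{zu}Fx_H$, i.e.\ into the ``ideal'' certainty-equivalent signal (as if $u = Fx_s$ were applied), the estimation-error contribution $C_z\tilde{x}$, and the control-mismatch contribution $D_{zu}Fx_H$.

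The core computation is a completion of squares on $J_z = \limsup_{T\to\infty}\frac{1}{T}\mathbb{E}\int_0^T z^\intercal z\,dt$. Substituting the split above, the cross terms between $\tilde{x}$ and the filter-measurable signals vanish by the orthogonality established above, and the cross term coupling $x_H$ to the quadratic in $x_s$ is eliminated using the control Riccati identity $C_z^\intercal C_z = -A^\intercal X - XA + F^\intercal(D_{zu}^\intercal D_{zu})F$, exactly as in the standard $\mathcal{H}_2$ separation proof. What survives from the certainty-equivalent and estimation-error parts is precisely $\gamma_0$, reproducing \eqref{eq:gamma2}, plus the residual quadratic $x_H^\intercal F^\intercal (D_{zu}^\intercal D_{zu})F x_H = \eta^\intercal\eta$, which is the reset-system term in \eqref{eq:optimal_cost}.

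Finally, optimality requires the matching lower bound: no causal controller can do better. For a fixed, deterministic sampling sequence this follows because the sensor-side filter is minimum-variance, and because, given that the actuator receives information only at the $t_i$, the best attainable control between samples is the model-based prediction simulated by \eqref{eq:lqr}; any deviation only adds a nonnegative quadratic to $\eta^\intercal\eta$. The main obstacle is the extension to sampling times that depend on the controlled variables, where the $\{t_i\}$ become stopping times of the closed-loop filtration. One must then verify that the innovations orthogonality and the completion-of-squares identity are preserved under optional stopping, and that conditioning on the event-triggering decisions reveals no information sharpening the estimator beyond $x_s$. I expect this measurability argument---showing the decomposition is invariant to the adapted triggering rule---to be the delicate step, whereas the algebraic cost identity itself is routine.
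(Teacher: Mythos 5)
You should first be aware that the paper does not prove this theorem internally: its ``proof'' is a pointer to \cite[Thm.~5.1]{mir17} and \cite[Thm.~3.1]{mir_cdc}, plus a remark about the scaling $(D_{zu}^\intercal D_{zu})^{\frac{1}{2}}$, so your attempt has to stand on its own. The achievability half of your sketch is sound and standard: the differencing of \eqref{eq:lqr} against \eqref{eq:kalman} giving $\dot{x}_{\textsc{\tiny H}} = Ax_{\textsc{\tiny H}} + \epsilon$ with resets is correct, and the completion of squares (control ARE on the true state, then $x = x_s + \tilde{x}$ with innovations orthogonality) yields the identity $J_z = \gamma_0 + \limsup_{T\to\infty}\frac{1}{T}\mathbb{E}\!\int_0^T \|(D_{zu}^\intercal D_{zu})^{\frac{1}{2}}(u - Fx_s)\|^2 dt$ for any observation-adapted $u$, which with $u = Fx_a$ collapses to \eqref{eq:optimal_cost}. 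Your lower-bound argument is also fine for \emph{deterministic} $\{t_i\}$, where the post-$t_i$ innovations are independent of the actuator's information, so $x_a$ is the conditional mean of $x_s$ and $Fx_a$ is pointwise optimal.

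The genuine gap is the converse for sampling sequences that depend on the controlled variables, and the patch you propose for it is wrong as stated. You say one must verify that ``conditioning on the event-triggering decisions reveals no information sharpening the estimator beyond $x_s$.'' On the sensor side this is vacuous (the sensor sees all of $y$), but on the actuator side it is false in general: between transmissions the actuator knows that \emph{no} sample has arrived, and for a trigger rule $\phi(x_{\textsc{\tiny H}})$ this is exactly the event $\{x_{\textsc{\tiny H}}(s)\in\Omega,\ t_i < s \le t\}$, which does alter the conditional law of $x_{\textsc{\tiny H}}(t)$ --- this ``negative information'' is the very mechanism by which event-based sampling outperforms periodic sampling in the rest of the paper. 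Hence $\mathbb{E}[x_s(t)\mid\mathcal{F}^{\mathrm{act}}_t] = x_a(t) - \mathbb{E}[x_{\textsc{\tiny H}}(t)\mid\mathcal{F}^{\mathrm{act}}_t]$ need not equal $x_a(t)$ for an arbitrary adapted trigger, the pointwise minimizer of the excess cost is $F\,\mathbb{E}[x_s\mid\mathcal{F}^{\mathrm{act}}_t]$ rather than $Fx_a$, and your claim that ``any deviation only adds a nonnegative quadratic'' does not follow. Showing that no causal $K$ can exploit the bias $\mathbb{E}[x_{\textsc{\tiny H}}\mid\mathcal{F}^{\mathrm{act}}_t]$ to beat \eqref{eq:optimal_cost} is precisely the nontrivial content for which the paper cites \cite{mir_cdc}; for the sign-symmetric trigger sets that actually arise here one can argue the bias vanishes by a $x_{\textsc{\tiny H}}\mapsto -x_{\textsc{\tiny H}}$ symmetry, but the theorem is stated for arbitrary adapted $\{t_i\}$, and your sketch locates the difficulty correctly while supplying neither the statement nor the proof of the fact that resolves it.
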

\smallskip
\begin{proof}
See proof of Theorem 5.1 in \cite{mir17} for the original derivation, and proof of Theorem 3.1 in \cite{mir_cdc} for the extension to stochastic disturbances and sampling sequences that are allowed to depend on controlled variables. 
The scaling factor $(D_{zu}^\intercal D_{zu})^{\frac{1}{2}}$ in \eqref{eq:h_sys} is featured here since we do not assume the normalization $D_{zu}^\intercal D_{zu} = I$ as in \cite{mir17}.
\end{proof}

\begin{remark}
	The optimal controller structure also has a representation given explicitly in $\mathcal{S}$, $\bar{K}$ and $\mathcal{H}$, derived in \cite[Corollary 4.3]{mir17}. 
	It would be considered for practical implementations, but is less intuitive for analysis.
\end{remark}

A block diagram of the optimal controller structure \eqref{eq:kalman}--\eqref{eq:lqr} is presented in Fig. \ref{fig:converted_sys}.
Naturally, a copy of \eqref{eq:lqr} should be featured at the sensor side to compute the control signal $u$ for the Kalman--Bucy filter.

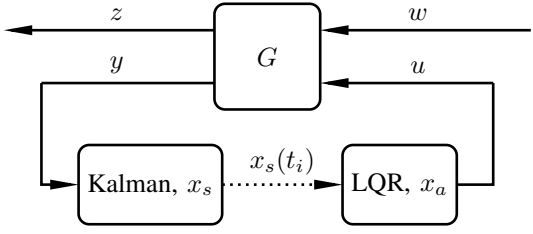
\begin{figure}[tb]
	\centering
    	\begin{tikzpicture}[auto, line width=1pt, >=narrow]
	\node[block,minimum height=4em, minimum width=4em](G){$G$};
	\node[coordinate,below=10mm of G](K){};
	\node[block,left=6mm of K](S){Kalman, $x_s$};
	\node[block,right=10mm of K](H){LQR, $x_a$};
	\node[coordinate,right=13mm of $(G.south east)!0.25!(G.north east)$](uend){};
	\node[coordinate,left=28mm of $(G.south west)!0.75!(G.north west)$](zend){};
	\node[coordinate,right=28mm of $(G.south east)!0.75!(G.north east)$](wstart){};
	\node[coordinate,left=23mm of $(G.south west)!0.25!(G.north west)$](yturn){};
	\node[coordinate,right=23mm of $(G.south east)!0.25!(G.north east)$](uturn){};

	\draw[dotted,->](S)--node[above]{$x_s(t_i)$}(H);
	\draw[-]($(G.south west)!0.25!(G.north west)$)-|node[above,xshift=10mm]{$y$}(yturn);
	\draw[->](yturn)|-(S.west);
	\draw[-](H.east)-|(uturn);
	\draw[->](uturn)|-node[above,xshift=-10mm]{$u$}($(G.south east)!0.25!(G.north east)$);
	\draw[->]($(G.south west)!0.75!(G.north west)$)|-node[above,xshift=-13mm]{$z$}(zend);
	\draw[->](wstart)|-node[above,xshift=-15mm]{$w$}($(G.south east)!0.75!(G.north east)$);
	\end{tikzpicture}
    \caption{The optimal sampled-data controller structure. 
    	It features a Kalman--Bucy filter on the sensor side that intermittently transmits its estimate to an LQR controller simulating the closed-loop system on the actuator side.}
    \label{fig:converted_sys}
\end{figure}

The system $H$ from Theorem \ref{th:opt_perf} describes the inter-sample degradation compared to the performance of the continuous-time LQG controller.
It is driven by the signal $\epsilon$, which will be a white process with intensity $LD_{yw}D_{yw}^\intercal L^\intercal$.

\subsection{The Optimal Sampling Problem}
Using \eqref{eq:optimal_cost} in Theorem \ref{th:opt_perf}, we can rewrite the original problem \eqref{eq:opt_goal} as
\begin{align*}    
    \min_{\{t_i\}}&(J_z + \rho f) = \gamma_0 + \min_{\{t_i\}}(J_{\textsc{\tiny H}} + \rho f),\\
    J_{\textsc{\tiny H}} &\coloneqq \limsup_{T\rightarrow\infty}\frac{1}{T}\mathbb{E}[\int_0^T\eta(t)^\intercal \eta(t)dt],
\end{align*}
i.e., it is sufficient to consider optimization over the linear reset system in \eqref{eq:h_sys}.
Therefore we construct a sampling scheme based on the signal $x_{\textsc{\tiny H}}$, $\phi(x_{\textsc{\tiny H}}) : \mathbb{R}^n \rightarrow \{0,1\}$, where $1$ denotes ``sample'' and $0$ denotes ``no sample'', i.e., $\{t_i\} = \{t\,|\, \phi(x_{\textsc{\tiny H}}(t))=1\}$.
The problem of finding the optimal sampling scheme is thus 
\begin{align}
    \begin{split}    
    &\min_{\phi}(J_{\textsc{\tiny H}} + \rho f).
    \label{eq:opt_problem}
    \end{split}
\end{align}
\section{A FRAMEWORK FOR THE OPTIMAL SAMPLING PROBLEM}
\label{sec:optimal_bounds}
A problem similar to \eqref{eq:opt_problem} was considered in \cite[Paper I and II]{toivo12}, and we will build upon the framework presented there.
It considers optimal event-based impulse control of LTI systems, driven by a white process with intensity $R$, where the system state $x$ is reset to zero at each actuation. 
The cost considered in \cite{toivo12} has the form
\begin{align*}
	\limsup_{T\rightarrow\infty} \frac{1}{T}\mathbb{E}[\int_{0}^{T}x^\intercal(t)Qx(t)dt] + \rho f,
\end{align*}
with $f$ and $\rho$ defined as in \eqref{eq:average_rate}, \eqref{eq:opt_goal}. 
The state cost matrix $Q$ is considered a design variable. 
Note that if we consider the choice $Q = F^\intercal D_{zu}^\intercal D_{zu}F$, this cost is equivalent to the one in \eqref{eq:opt_problem}.
Furthermore, with $R = LD_{yw}D_{yw}^\intercal L^\intercal$ the system configuration is identical to \eqref{eq:h_sys}.
The framework of \cite[Paper I and II]{toivo12} is thus directly applicable to \eqref{eq:opt_problem}.

\subsection{The Equivalent Value Function Problem}
The framework in \cite[Paper I and II]{toivo12} is based on introducing and optimizing bounds on the cost, $\ubar{J} \leq J_{\textsc{\tiny H}} + \rho f \leq \bar{J}$. 
The bounds are determined by finding a bounded, $\mathcal{C}^2$, value function $V : x_{\textsc{\tiny H}}\rightarrow\mathbb{R}$, satisfying the path constraints
\begin{align*}
\begin{cases}
	\frac{1}{T}(j + \mathbb{E}[V(x_{\textsc{\tiny H}}(T)) - V(x_{\textsc{\tiny H}}(0))]) \geq \ubar{J},\\
	\frac{1}{T}(j + \mathbb{E}[V(x_{\textsc{\tiny H}}(T)) - V(x_{\textsc{\tiny H}}(0))]) \leq  \bar{J}, \\
\end{cases}\\
	j \eqqcolon \mathbb{E}[\int_{0}^{T}x_{\textsc{\tiny H}}^\intercal(t)Qx_{\textsc{\tiny H}}(t)dt] + \rho\mathbb{E}[\sum_{i=0}^{\infty}1_{t_i\leq T}].
\end{align*}
If such a function $V$ is found for a given sampling scheme, then the limit $T\rightarrow\infty$ gives the desired bounds on $J_{\textsc{\tiny H}} + \rho f$.
The objective now is twofold; to minimize $\bar{J}$ with respect to the sampling scheme and to find a value function $V$ such that the bounds are tight, i.e., $\ubar{J} = \bar{J}$.
Achieving both means that we have solved \eqref{eq:opt_problem}.
The following theorem specifies when this is the case.

\begin{theorem}[{\cite[Paper II, Thm. 1]{toivo12}}]
Consider the optimal sampling problem in \eqref{eq:opt_problem}.
Suppose a bounded, $\mathcal{C}^2$,  function $V$ and a constant $J$ satisfy
\begin{align}
    x_{\textsc{\tiny H}}^\intercal Qx_{\textsc{\tiny H}} + x_{\textsc{\tiny H}}^\intercal A^\intercal \nabla V + \frac{1}{2}\Tr(R\nabla^2V) \geq J,\,\,  \forall x_{\textsc{\tiny H}}\in \mathbb{R}^n,
    \label{eq:v_1}
    \\
    \rho \geq V(x_{\textsc{\tiny H}})-V(0), \quad\,\,\,\, \forall x_{\textsc{\tiny H}}\in \mathbb{R}^n,
    \label{eq:v_2}
\end{align}
where equality is achieved in at least one of \eqref{eq:v_1} and \eqref{eq:v_2} for each $x_{\textsc{\tiny H}}$. 
Then the optimal cost in \eqref{eq:opt_problem} is $J$, and
\begin{align}
	\frac{1}{T}(j + \mathbb{E}[V(x_{\textsc{\tiny H}}(T)) - V(x_{\textsc{\tiny H}}(0))]) \geq J.
	\label{eq:path_constraint}
\end{align}
Equality is achieved in \eqref{eq:path_constraint} when sampling is triggered on equality in \eqref{eq:v_2}, i.e., an optimal sampling scheme is
\begin{align*}
    \phi(x_{\textsc{\tiny H}}) = \begin{cases}
        1, \quad V(x_{\textsc{\tiny H}})- V(0) = \rho,\\
        0, \quad \text{ otherwise.}
    \end{cases}
\end{align*}
\label{th:bob_toivo}
\end{theorem}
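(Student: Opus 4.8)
The plan is to treat this as a verification (sufficiency) theorem for an impulse-control problem, in which the conditions \eqref{eq:v_1}--\eqref{eq:v_2} play the role of a Hamilton--Jacobi--Bellman quasi-variational inequality. The key observation is that, between resets, $x_{\textsc{\tiny H}}$ is a diffusion governed by \eqref{eq:h_sys} whose infinitesimal generator is exactly $\mathcal{L}V \coloneqq x_{\textsc{\tiny H}}^\intercal A^\intercal \nabla V + \frac{1}{2}\Tr(R\nabla^2 V)$, i.e.\ the left-hand side of \eqref{eq:v_1} minus the running cost $x_{\textsc{\tiny H}}^\intercal Q x_{\textsc{\tiny H}}$. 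I would therefore apply the It\^o/Dynkin formula to $V(x_{\textsc{\tiny H}}(t))$ along a trajectory, accounting separately for the continuous diffusion dynamics and for the jumps at each sampling instant $t_i$, where $x_{\textsc{\tiny H}}$ is reset from $x_{\textsc{\tiny H}}(t_i^-)$ to $0$.

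First I would establish the lower bound $J_{\textsc{\tiny H}} + \rho f \geq J$ for an \emph{arbitrary} admissible sampling scheme. Applying It\^o's formula with jumps and taking expectations, the stochastic integral against the driving noise is a martingale and vanishes, leaving
\begin{align*}
\mathbb{E}[V(x_{\textsc{\tiny H}}(T)) - V(x_{\textsc{\tiny H}}(0))] &= \mathbb{E}\Big[\int_0^T \mathcal{L}V\,dt\Big] \\
&\quad + \mathbb{E}\Big[\sum_{t_i\leq T}\big(V(0)-V(x_{\textsc{\tiny H}}(t_i^-))\big)\Big].
\end{align*}
Each jump term is bounded below by $-\rho$ via \eqref{eq:v_2}, so the jump sum is at least $-\rho\,\mathbb{E}[\sum_i 1_{t_i\leq T}]$. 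Adding $j$ to both sides, the sampling-cost contribution $\rho\,\mathbb{E}[\sum_i 1_{t_i\leq T}]$ inside $j$ cancels against this bound, and \eqref{eq:v_1} shows the surviving integrand $x_{\textsc{\tiny H}}^\intercal Q x_{\textsc{\tiny H}} + \mathcal{L}V$ is at least $J$ pointwise; this is precisely \eqref{eq:path_constraint}. Dividing by $T$ and letting $T\to\infty$, boundedness of $V$ forces $\frac{1}{T}\mathbb{E}[V(x_{\textsc{\tiny H}}(T))-V(x_{\textsc{\tiny H}}(0))]\to 0$, yielding $J_{\textsc{\tiny H}} + \rho f \geq J$.

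To show the bound is tight, and hence that $J$ is optimal, I would verify that the proposed scheme turns every inequality into an equality. Sampling exactly when $V(x_{\textsc{\tiny H}})-V(0)=\rho$ makes each jump term equal to $-\rho$, so the jump sum equals $-\rho\,\mathbb{E}[\sum_i 1_{t_i\leq T}]$ exactly. Between samples the trajectory lies in the region where \eqref{eq:v_2} is strict; by the hypothesis that equality holds in at least one of \eqref{eq:v_1}--\eqref{eq:v_2} at every point, \eqref{eq:v_1} must then hold with equality there, so $x_{\textsc{\tiny H}}^\intercal Q x_{\textsc{\tiny H}} + \mathcal{L}V = J$ throughout the continuation region (the sampling instants form a null set in time). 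Both inequalities in the lower-bound argument thus become equalities, giving equality in \eqref{eq:path_constraint} and $J_{\textsc{\tiny H}} + \rho f = J$ for the proposed scheme, matching the lower bound.

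The main obstacle I anticipate is the rigorous bookkeeping of the jump--diffusion It\^o formula: justifying that the martingale term has zero mean (which requires enough integrability of $\nabla V$ along trajectories, supplied by the $\mathcal{C}^2$ and boundedness hypotheses together with stability of the reset dynamics), and confirming that the proposed scheme is admissible in the sense of \eqref{eq:average_rate}, i.e.\ that the continuation region $\{x_{\textsc{\tiny H}}:V(x_{\textsc{\tiny H}})-V(0)<\rho\}$ is bounded so that inter-sample times remain uniformly bounded and $f$ is finite. The complementarity condition is exactly what couples the arbitrary-scheme lower bound to the specific scheme's attainment of it, so the crux is recognizing that ``equality in at least one of \eqref{eq:v_1}--\eqref{eq:v_2}'' is the optimal-stopping/QVI structure that forces tightness.
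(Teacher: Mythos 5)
Your proposal is correct and follows essentially the same route as the proof the paper relies on: the paper defers to the verification argument of \cite[Paper II, Thm.~1]{toivo12}, which is exactly this Dynkin-formula bookkeeping of $V$ along the reset diffusion---jump terms bounded by $-\rho$ via \eqref{eq:v_2}, the running term bounded below by $J$ via \eqref{eq:v_1}, division by $T$ with boundedness of $V$ killing the endpoint terms, and the complementarity condition turning both inequalities into equalities under boundary-triggered sampling. The path-constraint structure you derive is precisely the one the paper's framework section sets up before stating the theorem, so no further comparison is needed.
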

\begin{proof}
See proof of Theorem 1 in \cite[Paper II]{toivo12}.
\end{proof}


The inequalities \eqref{eq:v_1}, \eqref{eq:v_2} can also be formulated as a stationary convection--diffusion type PDE over a domain with a free boundary, a so called Stefan problem \cite{stefan}. 
Such problems are usually found in models describing mediums undergoing phase change, e.g., melting ice.

Let $\Omega$ denote the set where equality is attained in \eqref{eq:v_1}. 
On the boundary, $\partial \Omega$, we have equality in \eqref{eq:v_2}, meaning that $\partial \Omega$ is a level set of $V$. 
Since $V$ is $\mathcal{C}^2$ and constant outside $\Omega$ (equality in \eqref{eq:v_2}), it holds that  $\nabla V = 0$ on $\partial \Omega$. 
We can thus write \eqref{eq:v_1}, \eqref{eq:v_2} as 
\begin{align}
	\begin{split}
	\begin{cases}
	x_{\textsc{\tiny H}}^\intercal Qx_{\textsc{\tiny H}} - J + x_{\textsc{\tiny H}}^\intercal A^\intercal \nabla V + \frac{1}{2}\Tr(R\nabla^2V) = 0,   
	\\
	 V(x_{\textsc{\tiny H}})\leq\rho + V(0),\,\,\quad\quad\quad\quad\quad\,\,\,\,\, \forall x_{\textsc{\tiny H}}\in \mathbb{R}^n,
	\end{cases}
		\end{split}
	    \label{eq:v_1s} 
	    \\
	    \begin{split}
	    \begin{cases}
	 V(x_{\textsc{\tiny H}}) = \rho + V(0), \\
	 \nabla V = 0,\quad\quad\quad\quad\quad\quad\quad\quad\quad\,\,\,\,\,\,\,\, \forall x_{\textsc{\tiny H}} \in \partial \Omega.
	 \end{cases}
    \end{split}
     \label{eq:v_2s}
\end{align}
For fixed values of $J$, $\rho$, and $V(0)$, this is a Stefan problem in $V$ with the free boundary $\partial \Omega$. 
The first two terms in \eqref{eq:v_1s} describe production, the third convection and the fourth diffusion. 
On the free boundary $\partial \Omega$ we have both a Dirichlet and a Neumann condition, given by \eqref{eq:v_2s}. 
While PDEs of this kind generally do not have a closed-form solution, there exist several numerical solution methods (see e.g., \cite{bernauer12,barrett14}).

\subsection{A Closed-Form Solution for the Integrator Case}
In the special case of an $n$-dimensional integrator system, there is in fact a closed-form solution to \eqref{eq:v_1}, \eqref{eq:v_2}.
\begin{theorem}[{\cite[Paper II, Thm. 2]{toivo12}}]
For $A=0$, the value function $V$ satisfying \eqref{eq:v_1} and \eqref{eq:v_2} is given by
\begin{align*}
    V(x_{\textsc{\tiny H}}) &= \begin{cases}-\frac{1}{4}g(x_{\textsc{\tiny H}})^2, &g(x_{\textsc{\tiny H}}) \geq 0,\\
    0, &\text{ otherwise,}
    \end{cases}
    \\
    g(x_{\textsc{\tiny H}}) &= 2\sqrt{\rho} - x_{\textsc{\tiny H}}^\intercal Px_{\textsc{\tiny H}},
\end{align*}
which corresponds to the optimal sampling scheme
\begin{align*}
    \phi(x_{\textsc{\tiny H}}) = \begin{cases}1, \quad 2\sqrt{\rho} - x_{\textsc{\tiny H}}^\intercal Px_{\textsc{\tiny H}} = 0,\\
    0, \quad\quad\quad\quad\,\,\, \text{ otherwise.}
    \end{cases} 
\end{align*}
For $R\succ0$, $Q\succ0$, the matrix $P\succ0$ is the unique solution of the Riccati-like equation
\begin{align}
    PRP + \frac{1}{2}\Tr(RP)P = Q.
    \label{eq:riccatilike}
\end{align}
The optimal cost is $J = \sqrt{\rho}\Tr(RP)$, where specifically
\begin{align}
    J_{\textsc{\tiny H}} &= \rho f =  \frac{1}{2}\sqrt{\rho}\Tr(RP).
    \label{eq:specific_cost}
\end{align}
\label{th:integrator}
\end{theorem}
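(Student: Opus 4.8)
The plan is to verify that the proposed $V$ together with the constant $J = \sqrt{\rho}\,\Tr(RP)$ satisfies the hypotheses of Theorem~\ref{th:bob_toivo} specialized to $A=0$, and then to read off the sampling scheme and the cost split. Writing $x$ for $x_{\textsc{\tiny H}}$, $g = 2\sqrt{\rho} - x^\intercal P x$, and $\Omega = \{x : g(x) > 0\}$, I would first record the derivatives of $V$ on $\Omega$: since $V = -\tfrac14 g^2$ there, direct differentiation gives $\nabla V = g\,Px$ and $\nabla^2 V = gP - 2(Px)(Px)^\intercal$. Evaluating $V(0) = -\rho$ turns constraint~\eqref{eq:v_2} into $V(x)\le 0$, which holds everywhere and with equality exactly on the complement of $\Omega$; that is also where $\nabla V = 0$, confirming the smooth-fit (Neumann) condition on $\partial\Omega$.

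The core step is constraint~\eqref{eq:v_1}, whose left-hand side reduces, for $A=0$, to $x^\intercal Q x + \tfrac12\Tr(R\nabla^2 V)$. Substituting the Hessian gives $\tfrac12\Tr(R\nabla^2 V) = \tfrac12 g\,\Tr(RP) - x^\intercal PRP\,x$, so the left-hand side becomes $x^\intercal(Q - PRP)x + \tfrac12 g\,\Tr(RP)$. Here I would invoke the Riccati-like equation~\eqref{eq:riccatilike} in the form $Q - PRP = \tfrac12\Tr(RP)\,P$, which cancels the $x$-dependent terms and leaves exactly $\sqrt{\rho}\,\Tr(RP) = J$ throughout $\Omega$; hence~\eqref{eq:v_1} holds with equality there. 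On the complement, $V\equiv 0$ makes the left-hand side $x^\intercal Q x = x^\intercal PRP\,x + \tfrac12\Tr(RP)\,x^\intercal P x$, which is at least $\tfrac12\Tr(RP)\cdot 2\sqrt{\rho} = J$ because $x^\intercal P x \ge 2\sqrt{\rho}$ and $PRP \succeq 0$ there. Thus at every $x$ equality holds in at least one of~\eqref{eq:v_1},~\eqref{eq:v_2}, and Theorem~\ref{th:bob_toivo} delivers both the stated sampling scheme and $J = \sqrt{\rho}\,\Tr(RP)$.

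Next I would settle existence and uniqueness of $P \succ 0$ in~\eqref{eq:riccatilike}. Congruence with $R^{1/2}$ and the substitutions $\tilde P = R^{1/2}PR^{1/2}$, $\tilde Q = R^{1/2}QR^{1/2}$ reduce it to $\tilde P^2 + \tfrac{\mu}{2}\tilde P = \tilde Q$ with the consistency condition $\mu = \Tr\tilde P$. Any solution commutes with $\tilde Q$, so in the eigenbasis of $\tilde Q$ this decouples into scalar quadratics $p_i^2 + \tfrac{\mu}{2}p_i = q_i$ whose positive roots $p_i(\mu)$ are continuous and strictly decreasing, with $p_i(0)=\sqrt{q_i}$ and $p_i(\mu)\to 0$. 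The map $\mu\mapsto\sum_i p_i(\mu)$ is then strictly decreasing from a positive value to $0$, so $\mu = \sum_i p_i(\mu)$ has a unique root $\mu^\star > 0$, yielding a unique $\tilde P \succ 0$ and hence a unique $P = R^{-1/2}\tilde P R^{-1/2} \succ 0$.

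Finally, to obtain the split~\eqref{eq:specific_cost} I would compute the mean intersample time explicitly. Since $A=0$, $x$ is a Brownian motion with covariance $R$ reset at $\partial\Omega$, and the expected exit time $u(x) = \mathbb{E}_x[\tau]$ solves $\tfrac12\Tr(R\nabla^2 u) = -1$ on $\Omega$ with $u=0$ on $\partial\Omega$; the ansatz $u(x) = (2\sqrt{\rho} - x^\intercal P x)/\Tr(RP)$ verifies both, giving $\mathbb{E}_0[\tau] = 2\sqrt{\rho}/\Tr(RP)$. A renewal-reward argument then yields $f = 1/\mathbb{E}_0[\tau] = \Tr(RP)/(2\sqrt{\rho})$, so $\rho f = \tfrac12\sqrt{\rho}\,\Tr(RP)$ and $J_{\textsc{\tiny H}} = J - \rho f = \tfrac12\sqrt{\rho}\,\Tr(RP)$, as claimed. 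The main obstacle I anticipate is not the algebra but the regularity gap at the free boundary: the Hessian jumps from $-2(Px)(Px)^\intercal$ inside to $0$ outside, so $V$ is only $\mathcal{C}^1$ across $\partial\Omega$, not $\mathcal{C}^2$ as Theorem~\ref{th:bob_toivo} nominally requires. Closing this demands justifying the Dynkin/verification step~\eqref{eq:path_constraint} for a $\mathcal{C}^1$ value function that is piecewise $\mathcal{C}^2$ — for instance by arguing that the reset diffusion spends zero time on the measure-zero boundary — and this is the delicate part of the argument.
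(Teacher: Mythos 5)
Your proof is correct, and its core step---verifying \eqref{eq:v_1}, \eqref{eq:v_2} by direct insertion of $V$ and $J=\sqrt{\rho}\,\Tr(RP)$, using \eqref{eq:riccatilike} to cancel the quadratic terms on $\Omega$ and to bound $x^\intercal Qx\geq J$ off $\Omega$---is exactly the approach the paper names, though the paper only asserts it in two lines and defers all details to the cited thesis. Where you genuinely diverge is in the two pieces the paper's proof explicitly leaves out. First, you prove uniqueness of $P\succ0$ via the congruence $\tilde P=R^{1/2}PR^{1/2}$ and a monotone fixed point $\mu=\sum_i p_i(\mu)$; the paper establishes the same fact separately in Theorem~\ref{th:scalarform} via the congruence $\tilde R=Q^{1/2}RQ^{\intercal/2}$ and the root of $h(s)=(n+4)s-\sum_j\sqrt{s^2+16r_j}$. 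The two reductions are essentially equivalent (both decouple into scalar quadratics and exploit strict monotonicity), so this is a matter of parametrization, not substance. Second, your derivation of \eqref{eq:specific_cost} via the expected exit time $u(x)=(2\sqrt{\rho}-x^\intercal Px)/\Tr(RP)$, solving $\tfrac12\Tr(R\nabla^2u)=-1$ with $u=0$ on $\partial\Omega$, together with renewal--reward to get $f=\Tr(RP)/(2\sqrt{\rho})$, is not in the paper at all---it is deferred wholesale to the reference---and it is a clean, self-contained way to obtain the split $J_{\textsc{\tiny H}}=\rho f$. Finally, your observation that $V$ is only $\mathcal{C}^1$ across $\partial\Omega$ (the Hessian jumps by $-2(Px)(Px)^\intercal$) while Theorem~\ref{th:bob_toivo} nominally demands $\mathcal{C}^2$ is accurate and applies equally to the paper's own statement; your sketched remedy (piecewise-$\mathcal{C}^2$ verification, e.g.\ noting the diffusion's occupation measure gives no mass to $\partial\Omega$) is the standard fix, and flagging it is a point in your favor rather than a gap, since the paper silently inherits the same issue.
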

\begin{proof}
    All statements but \eqref{eq:specific_cost} and the uniqueness of $P$ are proven by insertion into \eqref{eq:v_1} and \eqref{eq:v_2} in Theorem \ref{th:bob_toivo}. 
    For a detailed proof, see \cite[Paper II]{toivo12}.
\end{proof}
\begin{remark}
	Positive definiteness of $Q$ and $R$ in \eqref{eq:riccatilike} is assured by the standard assumptions on \eqref{eq:sys}, since $A=0$ implies $R = LD_{yw}D_{yw}^\intercal L^\intercal = B_wB_w^\intercal \succ 0$ and $Q = F^\intercal D_{zu}^\intercal D_{zu}F = C_z^\intercal C_z \succ0$.
\end{remark}

The shapes of the ellipsoidal trigger bounds, illustrated in Fig. \ref{fig:a0_shapes}, reflect the optimal trade-off between sampling rate and performance.
Directions associated with large eigenvalues of the noise intensity matrix $R$ will typically have a bound moved further away from the origin, sacrificing some performance to ensure fewer sampling events.
The opposite is true for directions associated with large eigenvalues of the state cost matrix $Q$.

\begin{figure}[tb]
	\centering
    \input{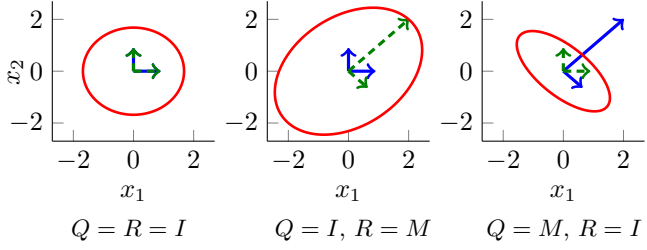}
    \put(-217,-10){\small$Q=R=I$}
    \put(-140,-10){\small$Q=I,\,R=M$}
    \put(-60,-10){\small$Q=M,\,R=I$}
    \caption{Examples of the optimal, ellipsoidal, trigger bounds (red, solid) in the integrator case. 
    The state cost matrix $Q$ (blue, solid) and noise intensity matrix $R$ (green, dashed) are represented by their respective eigenvectors scaled according to associated eigenvalues.
	For all cases $\rho = 1$ and $M=N_{\pi/4}^\intercal\text{diag}(1,10)N_{\pi/4}$, where $N_{\theta}$ is the rotation matrix defined in \eqref{eq:rotation_matrix}.}
    \label{fig:a0_shapes}
\end{figure}

Since the Riccati-like equation \eqref{eq:riccatilike} is nonlinear in $P$, it cannot be solved using semi-definite programming.
However, the following result reduces the problem to a scalar search.
\begin{theorem}
	\label{th:scalarform}
	Consider equation \eqref{eq:riccatilike}.
	Define the matrix $\tilde{R} \coloneqq Q^{\frac{1}{2}}RQ^{\frac{\intercal}{2}}\succ0$, and introduce the decomposition
	\begin{align*}
		\tilde{R} = U\bar{R}U^\intercal, \quad UU^\intercal=I, \quad \bar{R} = \text{diag}\{r_i\}. 
	\end{align*}
	The solution to \eqref{eq:riccatilike} is then given by 
	\begin{align*}
		P = Q^\frac{\intercal}{2}U\bar{P}U^\intercal Q^\frac{1}{2}, \quad \bar{P} = \text{diag}\{p_i\},
	\end{align*}
	where
	\begin{align*}
		p_i = -\frac{s}{4r_i} + \sqrt{\frac{s^2}{16r_i^2} + \frac{1}{r_i}},\quad i\in\{1,\,...,\,n\},
	\end{align*}
	and $s>0$ is the unique root of the scalar function
	\begin{align*}
		h(s) \coloneqq (n+4)s - \sum_{j=1}^{n}\sqrt{s^2 + 16r_j}.
	\end{align*}
\end{theorem}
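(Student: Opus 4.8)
The plan is to reduce the matrix equation \eqref{eq:riccatilike} to a scalar root-finding problem by a congruence transformation followed by simultaneous diagonalization. First I would introduce the change of variables $P = Q^{\intercal/2}WQ^{1/2}$, which is a bijection on the symmetric matrices that preserves positive definiteness. Using $\tilde{R} = Q^{1/2}RQ^{\intercal/2}$ together with the cyclic identity $\Tr(RP) = \Tr(\tilde{R}W)$, the term $PRP$ becomes $Q^{\intercal/2}W\tilde{R}WQ^{1/2}$, and after a congruence by $Q^{-1/2}$ the whole equation \eqref{eq:riccatilike} collapses to the cleaner form $W\tilde{R}W + \tfrac{1}{2}\Tr(\tilde{R}W)\,W = I$, with $P\succ0$ if and only if $W\succ0$.

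The key step, and the one I expect to be the crux, is to show that any positive-definite $W$ solving this reduced equation must commute with $\tilde{R}$. Writing the scalar $c \coloneqq \tfrac{1}{2}\Tr(\tilde{R}W) > 0$, the equation reads $W(\tilde{R}W + cI) = I$, so that $\tilde{R}W = W^{-1} - cI$. The right-hand side is symmetric, hence so is $\tilde{R}W$, which forces $\tilde{R}W = W\tilde{R}$. Commuting symmetric matrices are simultaneously diagonalizable, so $W = U\bar{P}U^\intercal$ with $\bar{P} = \text{diag}\{p_i\}$ expressed in the eigenbasis $U$ of $\tilde{R}$. This both justifies the diagonal ansatz of the statement and secures uniqueness, since within any repeated-eigenvalue block of $\tilde{R}$ the corresponding $p_i$ are forced to coincide, making $P$ independent of the choice of eigenbasis.

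Substituting $W = U\bar{P}U^\intercal$ decouples the matrix equation into the scalar quadratics $r_i p_i^2 + \tfrac{1}{2}s\,p_i = 1$ for $i = 1,\dots,n$, where $s \coloneqq \Tr(\bar{R}\bar{P}) = \sum_j r_j p_j$ is the single coupling term. Taking the positive root of each quadratic yields exactly $p_i = -\tfrac{s}{4r_i} + \sqrt{\tfrac{s^2}{16r_i^2} + \tfrac{1}{r_i}}$, and this root is strictly positive because the surd dominates $\tfrac{s}{4r_i}$, so that $\bar{P}\succ0$ and hence $P\succ0$. It then remains only to enforce the self-consistency relation $s = \sum_j r_j p_j(s)$. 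Using the identity $r_j\sqrt{\tfrac{s^2}{16r_j^2} + \tfrac{1}{r_j}} = \tfrac{1}{4}\sqrt{s^2 + 16r_j}$, the relation simplifies to $(n+4)s = \sum_j \sqrt{s^2 + 16r_j}$, i.e. $h(s) = 0$.

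Finally I would establish that $h$ has a unique positive root. Since $h(0) = -4\sum_j\sqrt{r_j} < 0$ while $h(s) \sim 4s \to +\infty$, a positive root exists by continuity; and since $h'(s) = (n+4) - \sum_j \tfrac{s}{\sqrt{s^2 + 16r_j}} > (n+4) - n = 4 > 0$ for $s>0$ (each summand being strictly below one because $\sqrt{s^2+16r_j} > s$), the function $h$ is strictly increasing, so the root is unique. This determines a unique $s$, hence a unique $\bar{P}$, $W$ and $P\succ0$, which completes the argument and simultaneously yields the uniqueness claimed in Theorem~\ref{th:integrator}.
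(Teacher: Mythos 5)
Your proof is correct, and its computational core coincides with the paper's: the paper applies the single transform $\bar{P} = U^\intercal Q^{-\frac{\intercal}{2}}PQ^{-\frac{1}{2}}U$ (your congruence $P = Q^{\intercal/2}WQ^{1/2}$ followed by the rotation $U$), arrives at the same scalar quadratics $r_ip_i^2 + \tfrac{1}{2}s\,p_i = 1$, the same root equation $h(s)=0$, and the same monotonicity bound $h'(s) > 4$ via existence from the sign change $h(0^+)<0$, $h(\infty)>0$. Where you genuinely depart from the paper is the treatment of the diagonal structure. The paper simply posits the ansatz $\bar{P} = \text{diag}\{p_i\}$ and then asserts, without argument, that if the scalar system has a solution it is \emph{the unique} solution of the matrix equation; the uniqueness of $P$ in Theorem~\ref{th:integrator} is likewise deferred to \cite{toivo12} rather than proved. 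Your commutation lemma --- rewriting the reduced equation as $\tilde{R}W = W^{-1} - cI$ with $c = \tfrac{1}{2}\Tr(\tilde{R}W) > 0$, noting the right-hand side is symmetric, and concluding $\tilde{R}W = W\tilde{R}$, hence simultaneous diagonalizability --- shows that \emph{every} positive-definite solution necessarily has the claimed form, so the ansatz is upgraded to a necessity. Combined with the uniqueness of the positive root of each quadratic (the product of the roots is $-1/r_i < 0$, so exactly one root is positive) and the strict monotonicity of $h$, this yields a self-contained uniqueness proof that the paper only asserts; your observation that within a repeated eigenvalue of $\tilde{R}$ all eigenvalues of the corresponding block of $W$ solve the same quadratic with the same $r_i$, forcing that block to be a scalar multiple of the identity, correctly disposes of the ambiguity in the choice of $U$. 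In short: same reduction, same scalar search, but your route additionally buys necessity of the diagonal form and hence uniqueness of $P$, closing a gap the paper leaves to an external reference.
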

\begin{proof}
The proof is presented in the appendix.
\end{proof}
  
For the integrator case, we can use Theorem \ref{th:integrator} to compute the cost as a function of the average sampling period $h_{\text{avg}} \coloneqq 1/f$
\begin{align}
  J_{\textsc{\tiny H}} = J_{\text{e}}h_{\text{avg}}, \quad J_{e} = \frac{[\Tr(RP)]^2}{4}.
  \label{eq:optperf_vs_samp}
\end{align}
With this result, it is interesting to compare how much optimal event-based sampling improves the performance over periodic sampling. 
The cost using periodic sampling is given by \cite[Remark 4]{braksmayer_mir}
\begin{align*}
      J_{\textsc{\tiny H}} = \frac{1}{h}\int_0^h\int_0^{h-\tau}\|D_{zu}Fe^{A\sigma}LD_{yw}\|_F^2d\sigma d\tau,
\end{align*}
where $h=h_{\text{avg}}$ is the sampling period. 
With $A=0$ we can rewrite this as
\begin{align}
     J_{\textsc{\tiny H}} = J_{\text{p}}h_{\text{avg}},\quad J_{\text{p}}= \frac{\Tr(RQ)}{2},
     \label{eq:perpef_vs_samp}
\end{align}
by noting that $\|D_{zu}FLD_{yw}\|_F^2 = \Tr(RQ)$. 
Based on \eqref{eq:optperf_vs_samp} and \eqref{eq:perpef_vs_samp}, we can prove that optimal event-based sampling will always outperform its periodic counterpart for the integrator case.
\begin{theorem}
	 \label{th:bounds}
    Using the optimal sampled-data controller structure \eqref{eq:kalman}, \eqref{eq:lqr}, the performance ratio, $J_{\text{ratio}} \coloneqq J_{\text{p}}/J_{\text{e}}$, between periodic and optimal event-based sampling for the $n$-dimensional integrator system is bounded by
        \begin{align}
        1 + \frac{2}{n} \leq J_{\text{ratio}} &< 3, \quad n\geq 2,
        \label{eq:bounds}
    \end{align}
	and $J_{\text{ratio}}=3$ for the special case $n=1$. 
    Furthermore, the bounds in \eqref{eq:bounds} are tight, with 
    equality in the lower bound when the eigenvalues of the matrix $RQ$ are identical.
    The ratio $J_{\text{ratio}}$ approaches the upper bound when all but one eigenvalue of $RQ$ approach zero.
\end{theorem}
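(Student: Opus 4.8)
The plan is to collapse the matrix-valued ratio into a scalar constrained-optimization problem by passing to the eigenbasis supplied by Theorem~\ref{th:scalarform}. First I would write $J_{\text{ratio}} = 2\Tr(RQ)/[\Tr(RP)]^2$ and evaluate the two traces using the decomposition $\tilde R = U\bar R U^\intercal$ together with $P = Q^{\intercal/2}U\bar P U^\intercal Q^{1/2}$. The cyclic property of the trace gives $\Tr(RQ) = \Tr(\tilde R) = \sum_i r_i$ and $\Tr(RP) = \Tr(\bar R\bar P) = \sum_i r_i p_i$. Substituting the closed form for $p_i$ yields $r_i p_i = \tfrac14\big(\sqrt{s^2+16r_i}-s\big)$, so $\Tr(RP) = \tfrac14\sum_i\sqrt{s^2+16r_i} - \tfrac{ns}{4}$. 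The decisive step is to invoke the defining equation $h(s)=0$, i.e.\ $\sum_i\sqrt{s^2+16r_i}=(n+4)s$, which collapses everything to the clean identity $\Tr(RP)=s$.

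Next I would normalize by setting $b_i \coloneqq \sqrt{s^2+16r_i}/s$. Because each $r_i>0$, every $b_i>1$, and the equation $h(s)=0$ reads simply $\sum_i b_i = n+4$. Since $r_i = \tfrac{s^2}{16}(b_i^2-1)$, the numerator becomes $\Tr(RQ) = \sum_i r_i = \tfrac{s^2}{16}\big(\sum_i b_i^2 - n\big)$, and dividing by $[\Tr(RP)]^2 = s^2$ produces the purely combinatorial form $J_{\text{ratio}} = \tfrac18\big(\sum_i b_i^2 - n\big)$. The problem is now to extremize $\sum_i b_i^2$ over the simplex $\{\,b_i>1,\ \sum_i b_i = n+4\,\}$.

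For the lower bound, the Cauchy--Schwarz (quadratic--arithmetic mean) inequality gives $\sum_i b_i^2 \ge (n+4)^2/n$, with equality exactly when all $b_i$ coincide; this yields $J_{\text{ratio}} \ge 1+\tfrac2n$. Equal $b_i$ means equal $r_i$, and since $\tilde R$ is similar to $RQ$ this is precisely the case of identical eigenvalues of $RQ$, establishing tightness of the lower bound. For the upper bound, $\sum_i b_i^2$ is convex on the simplex and hence maximized at a vertex, where $n-1$ of the $b_i$ equal $1$ and the remaining one equals $5$, giving $\sum_i b_i^2 = n+24$ and the value $3$. Because the constraint $b_i>1$ is strict, this vertex is never attained for $n\ge2$, so $J_{\text{ratio}}<3$, with the bound approached as all but one $r_i\to0$, i.e.\ as all but one eigenvalue of $RQ$ tends to zero. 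The case $n=1$ is immediate: the single constraint forces $b_1=5$, hence $J_{\text{ratio}}=\tfrac18(25-1)=3$.

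I expect the main obstacle to be the algebraic bookkeeping that produces the identity $\Tr(RP)=s$ and the reduction to the constraint $\sum_i b_i = n+4$ with $b_i>1$; this is where the particular structure of Theorem~\ref{th:scalarform} is essential. Once the ratio is expressed as $\tfrac18\big(\sum_i b_i^2-n\big)$ on that simplex, the two bounds and their tightness follow from standard convexity reasoning, and the only remaining care is translating the equality and limiting cases faithfully into statements about the eigenvalues of $RQ$ and verifying that strictness of $b_i>1$ rules out attainment of the upper bound.
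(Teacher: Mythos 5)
Your proof is correct, but it takes a genuinely different route from the paper's. The paper never touches Theorem~\ref{th:scalarform}: it multiplies \eqref{eq:riccatilike} by $R$ to get $RQ = (RP)^2 + \tfrac12\Tr(RP)\,RP$, works with the eigenvalues $\lambda_i>0$ of $RP$ to write $J_{\text{ratio}} = 1 + 2\|\lambda\|_2^2/\|\lambda\|_1^2$, applies Cauchy--Schwarz to that norm ratio, and then needs a separate Jordan-form step ($RP = S\mathcal{J}S^{-1}$) to translate the equality and limiting conditions from the spectrum of $RP$ to the spectrum of $RQ$, which is what the theorem statement refers to. You instead route everything through the diagonalization of Theorem~\ref{th:scalarform}: the identity $\Tr(RP)=s$ (which is really just the definition $s=\sum_j p_j r_j$ from that theorem's proof, recovered via $r_ip_i = \tfrac14(\sqrt{s^2+16r_i}-s)$ and $h(s)=0$) plus the substitution $b_i=\sqrt{s^2+16r_i}/s$ reduce the ratio to $\tfrac18\bigl(\sum_i b_i^2 - n\bigr)$ on the simplex $\sum_i b_i = n+4$, $b_i>1$; note your $b_i$ relate to the paper's $\lambda_i$ by $\lambda_i = r_ip_i = \tfrac{s}{4}(b_i-1)$, so the two formulas agree. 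Your route buys a cleaner endgame: since $\tilde R = Q^{\frac12}RQ^{\frac{\intercal}{2}}$ is similar to $RQ$, the $r_i$ \emph{are} the eigenvalues of $RQ$, so the equality case (all $r_i$ equal) and the limiting case (all but one $r_i\to0$) read off immediately, dispensing with the paper's Jordan argument, and strict convexity of $\sum_i b_i^2$ (maximized only at simplex vertices, excluded by $b_i>1$) gives the strict upper bound. What the paper's route buys is independence from the closed-form solution: it uses only \eqref{eq:riccatilike} and positivity of the spectrum of $RP$, so it would stand even without Theorem~\ref{th:scalarform}. One point worth making explicit in your write-up: that the vertex value $3$ is genuinely \emph{approached} requires the correspondence $r\mapsto b$ to be onto the relative interior of the simplex, which is immediate from your own parametrization (given any admissible $b$ and any $s>0$, set $r_i=\tfrac{s^2}{16}(b_i^2-1)$ and verify $h(s)=0$), but you assert rather than state this.
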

\begin{proof}
The proof is presented in the appendix.	
\end{proof}

\begin{figure*}[tb]
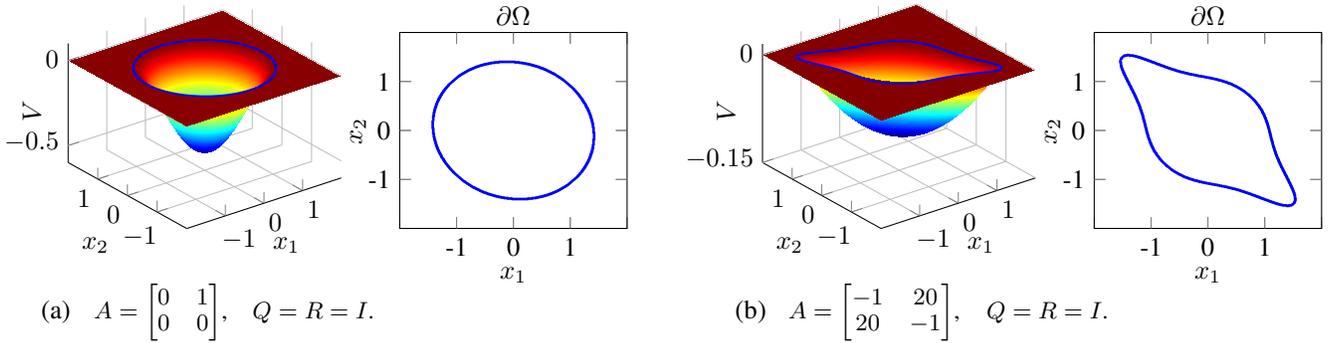

	\centering
	\input{convex3.tex}
	\put(-225,-10){(a)}
	\put(-205,-10){\small$A=\begin{bmatrix}
		0 & 1\\ 0 &0
		\end{bmatrix}$,}
	\put(-145,-10){\small$Q=R=I$.}\hspace{12pt}
	\input{nonconvex3.tex}
	\put(-225,-10){(b)}
	\put(-205,-10){\small$A=\begin{bmatrix}
		-1 & 20\\ 20 &-1
		\end{bmatrix}$,}
	\put(-130,-10){\small$Q=R=I$.}
	\caption{Numerically obtained value function $V$ (surface plot) and corresponding trigger bound $\partial \Omega$ (blue, solid) for (a) a double integrator system  and (b) a system with strong cross-coupling of the states. Note that the trigger bound is convex in (a) but non-convex in (b).}
	\label{fig:bounds}
\end{figure*}

\subsection{A Numerical Method for the General Case}
\label{sec:numerical_method}
A closed-form solution for the general case, $A\neq0$, remains to be found, if it even exists. However, we have been able to obtain $V$ numerically for the second-order case. The numerical solution gives some further intuition into the optimal event-based control strategy.

Consider the the Stefan problem of the form \eqref{eq:v_1n}--\eqref{eq:v_2n}. 
Assume that we wish to solve the non-stationary version of the problem, for some fixed $J$ and with $V(0,t) := -\rho$, $\forall t$ (the choice of $V(0,t)$ is not  essential, as it only acts as a reference point). The problem then has the form
\begin{align}
\begin{split}
\begin{cases}
x_{\textsc{\tiny H}}^\intercal Qx_{\textsc{\tiny H}} - J + x_{\textsc{\tiny H}}^\intercal A^\intercal \nabla V + \frac{1}{2} \Tr(R\nabla^2V) = \frac{\partial V}{\partial t},   
\\
V(x_{\textsc{\tiny H}},t) \leq 0,\quad\quad\quad\quad\quad\quad\quad\,\,\quad \forall x_{\textsc{\tiny H}}\in \mathbb{R}^n, \forall t,
\end{cases}
\end{split}
\label{eq:v_1n} 
\\
\begin{split}
\begin{cases}
V(x_{\textsc{\tiny H}},t) = 0,\\
\nabla V = 0,\quad\quad\quad\quad\quad\quad\quad\quad\quad\quad\, \forall x_{\textsc{\tiny H}} \in \partial \Omega, \forall t.
\end{cases}
\end{split}
\label{eq:v_2n}
\end{align}
A natural idea is to use standard numerical tools for PDE solving to simulate \eqref{eq:v_1n} in discrete time, while at each time step enforcing the inequality on $V$.
To approximate the differential operators in \eqref{eq:v_1n}, we use the backward-time central-space (BTCS) finite-difference method \cite{recktenwald11} and enforce the inequality by assigning $V \coloneqq \min(V,0)$ at each time step.
This assignment has the effect of moving the free boundary $\partial\Omega$ over time. 
The simulation is then progressed until a stopping criterion based on stationarity of $V$ is met. 
If stationarity is reached, this means that we have found a solution satisfying \eqref{eq:v_1s}, \eqref{eq:v_2s} for the specific $J$, and thus the optimal trigger bound is given by the resulting boundary $\partial\Omega$.
For further detail, we refer to the supplemental Matlab code\footnote{Available at {https://gitlab.control.lth.se/marcus/optimal-trigger-bound.git}}.

The method has been validated using the known results for the case $A=0$ and is successfully reaching a stationary solution for all second-order systems considered so far.
While it could in principle be applied to higher-order cases as well, this becomes unwieldy due to the time complexity, which grows exponentially in $n$. 
For second-order systems, preliminary findings suggest that the trigger bound is convex and almost ellipsoidal whenever the system states are relatively weakly coupled.
An example of this situation is seen in Fig. \ref{fig:bounds}a, which shows the resulting $V$ and trigger bound for a double (chain) integrator.
However, the optimal trigger bound is not convex in general, as seen in Fig. \ref{fig:bounds}b, where $V$ and the optimal trigger bound are presented for a system with a strong cross-coupling between the two states.

\section{NUMERICAL EXAMPLES}
Here we demonstrate the described optimal co-design of controller structure and event-based sampling through two numerical second-order examples---one integrator and one unstable system. 
We compare the trade-off between performance $J_z$ and average sampling period $h_{\text{avg}}$ for three different controller structures and sampling methods:
\begin{enumerate}[\setlabelwidth{12)}]
	\item[(a)] Standard zero-order hold LQG using periodic sampling.
	\item[(b)] The optimal controller structure, \eqref{eq:kalman}, \eqref{eq:lqr}, using periodic sampling.
	\item[(c)] The optimal controller structure, \eqref{eq:kalman}, \eqref{eq:lqr}, using optimal event-based sampling.
\end{enumerate} 

\subsection{Integrator Example}
For the integrator example, $A=0$, we consider a second-order system with the following parameters:
\begin{align*}
	B_w &= \begin{bmatrix}
	(R_d^{\frac{1}{2}}N_{\pi/8})^\intercal & 0
	\end{bmatrix}, &B_u &= C_y= I, \\
	C_z &= \begin{bmatrix}
	(Q_d^{\frac{1}{2}}N_{\pi/4})^\intercal &
	0
	\end{bmatrix}^\intercal, &D_{yw}&=D_{zu}^\intercal = \begin{bmatrix}
	0  &
	I
	\end{bmatrix}, 
\end{align*}
where
\begin{align}
	R_d &= Q_d = \begin{bmatrix}
	1 & 0\\
	0 & 5
	\end{bmatrix}, & N_{\theta} &= \begin{bmatrix}
	\cos(\theta) & -\sin(\theta) \\
	\sin(\theta) & \cos(\theta)
	\end{bmatrix}.
	\label{eq:rotation_matrix}
\end{align} 
This choice of $C_z$ and $B_w$ corresponds to
\begin{align*}
	Q &= N_{\pi/4}^\intercal Q_dN_{\pi/4}, & R &=N_{\pi/8}^\intercal R_dN_{\pi/8}, 
\end{align*}
i.e., the corresponding linear reset system $H$ will have asymmetric cost and noise intensity matrices whose eigenvectors are not aligned to each other nor the state axes.
This ensures that the integrator states are coupled through the input noise and controlled output, as to avoid the degenerated case of two uncoupled first-order integrator processes.

Trade-off curves for (b) and (c) are obtained through \eqref{eq:optperf_vs_samp} and \eqref{eq:perpef_vs_samp} respectively. 
For (a) we use the Matlab toolbox Jitterbug \cite{jitterbug02}. 

The resulting trade-off curves are presented in Fig. \ref{fig:a0example}. 
Note that the improvement made by going from the sub-optimal structure (a) to the optimal structure (b) is smaller than the improvement by going from periodic sampling (b) to optimal event-based sampling (c). 
This suggests that sampling scheme, rather than controller structure, is the main performance contributor in this example. 

\begin{figure}[tb]
	\centering
%
%
\begin{tikzpicture}

\begin{axis}[%
width=0.78\columnwidth,
scale only axis,
xmin=0,
xmax=1.4,
xlabel={Average Sampling Period, $h_{\text{avg}}$},
x label style={at={(axis description cs:0.5,0.03)},anchor=north},
ytick={1,1.25,1.5,1.75,2,2.25},
yticklabels = {$\gamma_0$,1.25$\gamma_0$,1.5$\gamma_0$,1.75$\gamma_0$,2$\gamma_0$,2.25$\gamma_0$},
xmajorgrids,
ymin=1,
ymax=2.2,
ylabel={Cost, $J_z$},
ylabel style={rotate=0,at={(axis description cs:-0.05,0.5)},anchor=north},
ymajorgrids,
legend style={at={(0.0,1)},anchor=north west,draw=black,fill=white,legend cell align=left,font=\small}
]
\addplot [color=blue,solid,line width=1.0pt]
  table[row sep=crcr]{%
0.001	1.00051633522843\\
0.025	1.01296395600261\\
0.05	1.02604365007438\\
0.075	1.03923895016053\\
0.1	1.05254963673833\\
0.125	1.06597540366431\\
0.15	1.07951585934985\\
0.175	1.09317052825439\\
0.2	1.10693885268306\\
0.225	1.12082019487314\\
0.25	1.13481383935118\\
0.275	1.14891899554092\\
0.3	1.16313480060039\\
0.325	1.17746032246499\\
0.35	1.19189456307265\\
0.375	1.20643646174622\\
0.4	1.22108489870781\\
0.425	1.23583869869994\\
0.45	1.25069663468832\\
0.475	1.26565743162169\\
0.5	1.28071977022493\\
0.525	1.29588229080251\\
0.55	1.31114359703061\\
0.575	1.32650225971747\\
0.6	1.34195682051314\\
0.625	1.35750579555119\\
0.65	1.37314767900671\\
0.675	1.38888094655657\\
0.7	1.40470405872966\\
0.725	1.42061546413633\\
0.75	1.43661360256826\\
0.775	1.45269690796131\\
0.8	1.46886381121547\\
0.825	1.4851127428678\\
0.85	1.50144213561516\\
0.875	1.51785042668527\\
0.9	1.53433606005554\\
0.925	1.55089748852022\\
0.95	1.56753317560751\\
0.975	1.58424159734899\\
1	1.60102124390458\\
1.025	1.61787062104674\\
1.05	1.63478825150846\\
1.075	1.65177267619958\\
1.1	1.6688224552969\\
1.125	1.68593616921336\\
1.15	1.70311241945203\\
1.175	1.72034982935073\\
1.2	1.73764704472319\\
1.225	1.75500273440266\\
1.25	1.77241559069382\\
1.275	1.78988432973902\\
1.3	1.80740769180432\\
1.325	1.82498444149118\\
1.35	1.84261336787915\\
1.375	1.86029328460476\\
1.4	1.87802302988191\\
1.425	1.89580146646835\\
1.45	1.9136274815832\\
1.475	1.93149998677967\\
1.5	1.9494179177774\\
1.525	1.96738023425823\\
1.55	1.98538591962934\\
1.575	2.00343398075705\\
1.6	2.02152344767479\\
1.625	2.03965337326819\\
1.65	2.05782283294028\\
1.675	2.07603092425938\\
1.7	2.09427676659218\\
1.725	2.11255950072439\\
1.75	2.13087828847094\\
1.775	2.1492323122778\\
1.8	2.16762077481713\\
1.825	2.18604289857745\\
1.85	2.2044979254502\\
1.875	2.22298511631434\\
1.9	2.24150375061976\\
1.925	2.2600531259711\\
1.95	2.27863255771263\\
1.975	2.29724137851527\\
2	2.31587893796646\\
};
\addlegendentry{(a) ZOH LQG, Periodic};

\addplot [color=red,densely dashdotted,line width=1pt]
  table[row sep=crcr]{%
0	1\\
0.1	1.0516242597217\\
0.2	1.10324851944339\\
0.3	1.15487277916509\\
0.4	1.20649703888679\\
0.5	1.25812129860849\\
0.6	1.30974555833018\\
0.7	1.36136981805188\\
0.8	1.41299407777358\\
0.9	1.46461833749528\\
1	1.51624259721697\\
1.1	1.56786685693867\\
1.2	1.61949111666037\\
1.3	1.67111537638207\\
1.4	1.72273963610376\\
1.5	1.77436389582546\\
1.6	1.82598815554716\\
1.7	1.87761241526886\\
1.8	1.92923667499055\\
1.9	1.98086093471225\\
2	2.03248519443395\\
2.1	2.08410945415565\\
2.2	2.13573371387734\\
2.3	2.18735797359904\\
2.4	2.23898223332074\\
2.5	2.29060649304243\\
2.6	2.34223075276413\\
2.7	2.39385501248583\\
2.8	2.44547927220753\\
2.9	2.49710353192922\\
3	2.54872779165092\\
};
\addlegendentry{$\text{(b) Opt. Struct., Periodic, }\gamma{}_\text{0}\text{ + 11.83h}_{\text{avg}}$};

\addplot [color=black,densely dashed,line width=1.0pt]
  table[row sep=crcr]{%
0	1\\
0.1	1.01961152618974\\
0.2	1.03922305237948\\
0.3	1.05883457856922\\
0.4	1.07844610475896\\
0.5	1.0980576309487\\
0.6	1.11766915713844\\
0.7	1.13728068332818\\
0.8	1.15689220951792\\
0.9	1.17650373570766\\
1	1.1961152618974\\
1.1	1.21572678808713\\
1.2	1.23533831427687\\ 
1.3	1.25494984046661\\
1.4	1.27456136665635\\
1.5	1.29417289284609\\
1.6	1.31378441903583\\
1.7	1.33339594522557\\
1.8	1.35300747141531\\
1.9	1.37261899760505\\
2	1.39223052379479\\
2.1	1.41184204998453\\
2.2	1.43145357617427\\
2.3	1.45106510236401\\
2.4	1.47067662855375\\
2.5	1.49028815474349\\
2.6	1.50989968093323\\
2.7	1.52951120712297\\
2.8	1.54912273331271\\
2.9	1.56873425950245\\
3	1.58834578569219\\
};
\addlegendentry{$\text{(c) Opt. Struct., Event-Based, }\gamma{}_\text{0}\text{ + 4.49h}_{\text{avg}}$};

\end{axis}
\end{tikzpicture}%
	\caption{Trade-off curves for the integrator example, with $\gamma_0 = 22.91$. 
		Note that $J_{\text{ratio}} = 11.83/4.49 \approx 2.63$, falling inside the bounds $2 \leq J_{\text{ratio}} < 3$ from Theorem \ref{th:bounds}. 
		The result suggests that sampling scheme, rather than controller structure, is the main performance contributor.}
	\label{fig:a0example}
\end{figure}
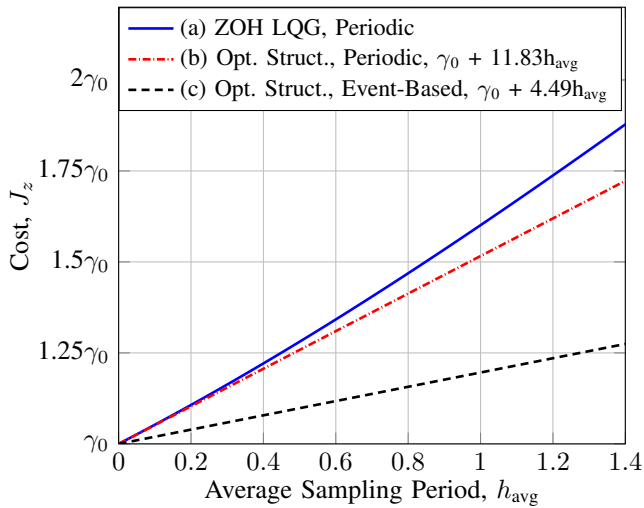

\subsection{Unstable Example}
For the unstable example we use the following parameters:
\begin{align*}
	 A&=\begin{bmatrix}
	 0 & 5 \\
	 5 & 0
	 \end{bmatrix}, 
\quad B_w = C_z^\intercal = \begin{bmatrix}
2.84 & 0 & 0 &0\\
-2.77 & 0.65 & 0 & 0
\end{bmatrix},\\
B_u &= C_y^\intercal = \begin{bmatrix} 
9 & 0\\
8.95 & 0.95
\end{bmatrix}, \quad\quad\,\,\,\, D_{zu}=D_{yw}^\intercal = \begin{bmatrix}
0 \\
I
\end{bmatrix},
\end{align*}
which correspond to $Q=R=I$.
For (c), we numerically obtain optimal trigger bounds for different values of $J$ using the method outlined in Sec.~\ref{sec:numerical_method}. 
The resulting trigger bounds are presented in Fig. \ref{fig:aneq0example_bound}. 
\begin{figure}[tb]
	\centering
	\input{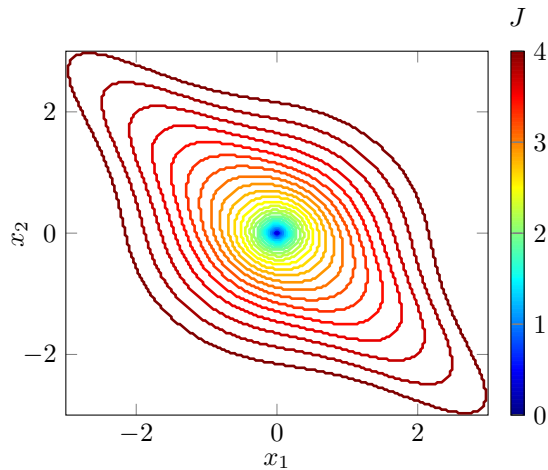}
	\caption{Trigger bounds for the unstable example, using $J \in [5\times10^{-4},\,4]$.}
	\label{fig:aneq0example_bound}
\end{figure}
Trade-off curves for (c) are obtained by simulating a sampled version of the system, with time-step $h_{\text{nom}} = 10^{-3}$, and averaging the cost and sampling period over the time horizon $T=2000$.
The time step $h_{\text{nom}}$ is approximately 130 times smaller than the smallest time constant of the continuous-time LQG closed-loop system.
Note that (c) needs to be simulated in this example, since the average sampling period $h_{\text{avg}}$ cannot be determined directly from $V$ and $J$.
Case (a) is still evaluated using Jitterbug, and (b) is evaluated using \eqref{eq:perpef_vs_samp}. 

The trade-off curves are presented in Fig. \ref{fig:aneq0example}. 
Also this example suggests that sampling scheme rather than controller structure is the main factor for improvement. 
Note that the ratio of improvement $J_{\text{ratio}}$ is increasing with $h_{\text{avg}}$ in this example, whereas  it for the integrator $J_{\text{ratio}}$ was constant. 
This is in line with \cite{kjbob99, meng12}, where similar observations were made for unstable first-order systems and unstable uncoupled second-order systems respectively.
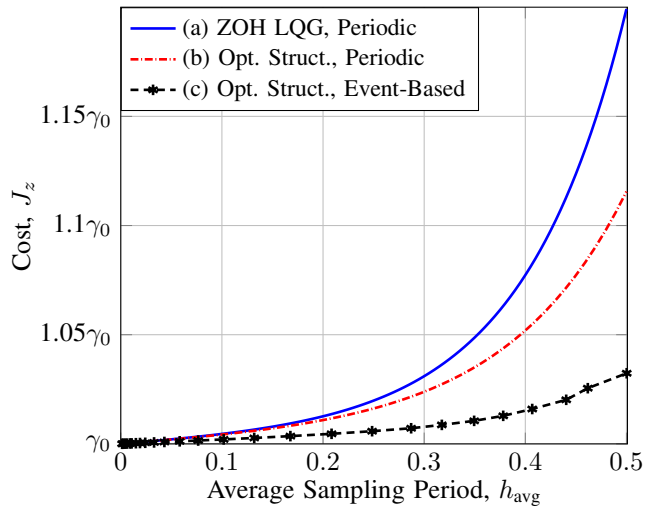
\begin{figure}[tb]
	\centering
%
%
\begin{tikzpicture}

\begin{axis}[%
width=0.78\columnwidth,
scale only axis,
scale only axis,
xmin=0,
xmax=0.501,
xlabel={Average Sampling Period, $h_{\text{avg}}$},
xmajorgrids,
ymin=1,
ymax=1.2,
x label style={at={(axis description cs:0.5,0.03)},anchor=north},
ytick={1,1.05,1.1,1.15},
yticklabels = {$\gamma_0$,1.05$\gamma_0$,1.1$\gamma_0$,1.15$\gamma_0$},
ymajorgrids,
ylabel={Cost, $J_z$},
ylabel style={rotate=0,at={(axis description cs:-0.05,0.5)},anchor=north},
legend style={at={(0.0,1)},anchor=north west,draw=black,fill=white,legend cell align=left,font=\small}
]

\addplot [color=blue,solid,line width=1.0pt]
table[row sep=crcr]{%
1e-06	1.00000003946519\\
0.005	1.00019791182095\\
0.01	1.00039718615519\\
0.015	1.00059809037969\\
0.02	1.00080089790691\\
0.025	1.0010058887258\\
0.03	1.0012133499619\\
0.035	1.00142357645773\\
0.04	1.00163687137481\\
0.045	1.0018535468189\\
0.05	1.0020739244897\\
0.055	1.00229833635679\\
0.06	1.00252712536335\\
0.065	1.00276064615929\\
0.07	1.00299926586569\\
0.075	1.00324336487235\\
0.08	1.00349333767037\\
0.085	1.0037495937219\\
0.09	1.00401255836904\\
0.095	1.0042826737843\\
0.1	1.00456039996483\\
0.105	1.00484621577302\\
0.11	1.00514062002587\\
0.115	1.00544413263604\\
0.12	1.00575729580727\\
0.125	1.00608067528723\\
0.13	1.00641486168092\\
0.135	1.00676047182788\\
0.14	1.00711815024672\\
0.145	1.00748857065052\\
0.15	1.00787243753701\\
0.155	1.0082704878574\\
0.16	1.00868349276819\\
0.165	1.00911225947028\\
0.17	1.00955763314012\\
0.175	1.01002049895766\\
0.18	1.01050178423629\\
0.185	1.0110024606602\\
0.19	1.01152354663469\\
0.195	1.01206610975552\\
0.2	1.0126312694035\\
0.205	1.01322019947081\\
0.21	1.01383413122612\\
0.215	1.0144743563257\\
0.22	1.01514222997809\\
0.225	1.01583917427051\\
0.23	1.01656668166531\\
0.235	1.01732631867547\\
0.24	1.0181197297284\\
0.245	1.01894864122786\\
0.25	1.01981486582439\\
0.255	1.02072030690508\\
0.26	1.02166696331402\\
0.265	1.02265693431557\\
0.27	1.02369242481295\\
0.275	1.02477575083558\\
0.28	1.02590934530889\\
0.285	1.02709576412156\\
0.29	1.02833769250545\\
0.295	1.02963795174454\\
0.3	1.03099950622986\\
0.305	1.03242547087853\\
0.31	1.03391911893565\\
0.315	1.03548389017907\\
0.32	1.03712339954764\\
0.325	1.03884144621529\\
0.33	1.0406420231337\\
0.335	1.04252932706823\\
0.34	1.04450776915227\\
0.345	1.04658198598702\\
0.35	1.04875685131526\\
0.355	1.05103748829856\\
0.36	1.05342928242893\\
0.365	1.05593789510865\\
0.37	1.05856927793176\\
0.375	1.06132968770462\\
0.38	1.06422570224275\\
0.385	1.06726423698562\\
0.39	1.07045256246984\\
0.395	1.07379832270691\\
0.4	1.07730955451122\\
0.405	1.08099470782859\\
0.41	1.08486266711501\\
0.415	1.08892277382393\\
0.42	1.09318485005457\\
0.425	1.09765922342487\\
0.43	1.10235675322931\\
0.435	1.10728885795637\\
0.44	1.11246754421932\\
0.445	1.11790543719539\\
0.45	1.12361581262877\\
0.455	1.12961263050319\\
0.46	1.13591057043244\\
0.465	1.14252506891849\\
0.47	1.14947235850084\\
0.475	1.15676950895866\\
0.48	1.16443447060942\\
0.485	1.17248611991037\\
0.49	1.18094430734683\\
0.495	1.18982990783974\\
0.5	1.19916487375266\\
};
\addlegendentry{(a) ZOH LQG, Periodic};

\addplot [color=red,densely dashdotted,line width=1.0pt]
  table[row sep=crcr]{%
1e-10	1.00000000000395\\
0.0102040817306122	1.00040303991751\\
0.0204081633612245	1.00080817994433\\
0.0306122449918367	1.00121753478833\\
0.040816326622449	1.0016332484489\\
0.0510204082530612	1.00205750915462\\
0.0612244898836735	1.002492564643\\
0.0714285715142857	1.0029407379059\\
0.081632653144898	1.00340444352437\\
0.0918367347755102	1.00388620472145\\
0.102040816406122	1.00438867126757\\
0.112244898036735	1.00491463838027\\
0.122448979667347	1.0054670667682\\
0.132653061297959	1.00604910397915\\
0.142857142928571	1.00666410722252\\
0.153061224559184	1.00731566784928\\
0.163265306189796	1.00800763768614\\
0.173469387820408	1.00874415743648\\
0.18367346945102	1.00952968737785\\
0.193877551081633	1.01036904060516\\
0.204081632712245	1.0112674190904\\
0.214285714342857	1.01223045285332\\
0.224489795973469	1.01326424256393\\
0.234693877604082	1.01437540592678\\
0.244897959234694	1.01557112822911\\
0.255102040865306	1.01685921747027\\
0.265306122495918	1.01824816452902\\
0.275510204126531	1.01974720886802\\
0.285714285757143	1.02136641032248\\
0.295918367387755	1.02311672757154\\
0.306122449018367	1.02501010394869\\
0.31632653064898	1.02705956131019\\
0.326530612279592	1.02927930274984\\
0.336734693910204	1.0316848250245\\
0.346938775540816	1.03429304163875\\
0.357142857171429	1.03712241762867\\
0.367346938802041	1.0401931171863\\
0.377551020432653	1.04352716537723\\
0.387755102063265	1.04714862532623\\
0.397959183693878	1.05108379237995\\
0.40816326532449	1.05536140690351\\
0.418367346955102	1.06001288753008\\
0.428571428585714	1.06507258686083\\
0.438775510216327	1.07057807180891\\
0.448979591846939	1.07657043099643\\
0.459183673477551	1.08309461185049\\
0.469387755108163	1.0901997903047\\
0.479591836738776	1.09793977629904\\
0.489795918369388	1.10637345858538\\
0.5	1.11556529269257\\
};
\addlegendentry{$\text{(b) Opt. Struct., Periodic }$};


\addplot [color=black,line width=1.0pt,densely dashed,mark=asterisk,mark options={solid}]
  table[row sep=crcr]{%
0.00131787032156036	1.00004215813108\\
0.00241086722510388	1.00006224628797\\
0.00420487850003574	1.00009798387239\\
0.00607317569393624	1.00013269762287\\
0.00902995688195589	1.00019165188317\\
0.0107135778528919	1.00022425007566\\
0.0149494707887341	1.00030688372518\\
0.0189967800457822	1.00038692691978\\
0.02419579235171	1.00049073626027\\
0.0329337373205111	1.00066400789855\\
0.0431313349148156	1.0008703132402\\
0.058254689502505	1.00117943962267\\
0.0765345170671973	1.00153470955485\\
0.101750101750102	1.00204368668886\\
0.131952233291548	1.00270651966066\\
0.167714884696017	1.00355859723975\\
0.208441896821261	1.00460843739639\\
0.248508946322068	1.00581071148236\\
0.287108814240597	1.00715879773686\\
0.317460317460317	1.00869483970653\\
0.349406009783368	1.01054401942806\\
0.378143316316884	1.01284306253228\\
0.406834825061025	1.01596710239231\\
0.440431622990531	1.02018461337561\\
0.46178711613946	1.02550518422589\\
0.500250125062531	1.03234859829448\\
};
\addlegendentry{$\text{(c) Opt. Struct., Event-Based}$};

\end{axis}
\end{tikzpicture}%
	\caption{Trade-off curves for the unstable example, with $\gamma_0 = 25.34$. 
		With $\gamma_0$ as zero-reference, the performance ratio between periodic and event-based sampling for the optimal controller structure grows from $J_{\text{ratio}}=1$ at $h_{\text{avg}}=0$ to $J_{\text{ratio}}\approx3.6$ at $h_{\text{avg}}=0.5$.}
	\label{fig:aneq0example}
\end{figure}

\section{CONCLUSION AND FUTURE WORK}
Based on the work on event-based impulse control in \cite{toivo12}, we have presented a framework for finding the optimal event-based sampling scheme for the LQG-optimal sampled-data controller structure. 
Solving the optimization problem for the sampling scheme is equivalent to finding a value function $V$, satisfying a stationary PDE with free boundary.
The function $V$, and thus the optimal sampling scheme, is available in closed form for the multidimensional integrator case.
The sampling scheme then corresponds to an ellipsoidal trigger bound in a linear reset system.
Also, a numerical method has been presented for obtaining $V$ in the second-order general case.
Examples show that the trigger bound is not necessarily convex in the general case.

Future work will be focused on refining the methods of obtaining $V$ for higher-order systems.
As long as no closed-form solution is available, more efficient methods for solving the free boundary problem are required before this is a viable design method for sampled-data control synthesis.
To this end, it is also important to further investigate properties of the optimal trigger bound, such as uniqueness, and whether it is always star-shaped as in all presented examples.  

\section{ACKNOWLEDGMENTS}
The authors would like to thank Toivo Henningsson and Leonid Mirkin for fruitful discussions. Special thanks to Leonid Mirkin for suggesting the simplified scalar search method of Theorem \ref{th:scalarform}.

\newpage
\section*{Appendix}
\label{sec:appendix}
\subsection*{Proof of Theorem \ref{th:scalarform}}
After the state transform $\bar{x}_{\textsc{\tiny H}} = U^\intercal Q^\frac{1}{2}x_{\textsc{\tiny H}}$, \eqref{eq:riccatilike} reads
\begin{align}
	\label{eq:simplify1}
	\bar{P}\bar{R}\bar{P} + \frac{1}{2}\Tr(\bar{R}\bar{P})\bar{P} = I,
\end{align}
where $\bar{P} = U^\intercal Q^{-\frac{\intercal}{2}}PQ^{-\frac{1}{2}}U$. 
With the ansatz $\bar{P} = \text{diag}{\{p_i\}}$, \eqref{eq:simplify1} can be written as
\begin{align}
	\label{eq:simplify2}
	p_i^2r_i + \frac{1}{2}\sum_{j=1}^{n}(p_jr_j)p_i = 1,\quad \forall i\in\{1,\,..,\,n\}.
\end{align}
Note that if there exists a solution to \eqref{eq:simplify2}, then it is the unique solution to \eqref{eq:simplify1}. 
Defining $s := \sum_{j=1}^{n}(p_jr_j) > 0$ and inserting into \eqref{eq:simplify2}, we get after some straightforward calculations
\begin{align}
	\label{eq:root}
    (n+4)s - \sum_{j=1}^{n}\sqrt{s^2 + 16r_j} = 0, \quad s>0.
\end{align}
Solving \eqref{eq:simplify2} is thus equivalent to finding a root of the scalar function $h$, defined as the left hand side of \eqref{eq:root}.
Since $h$ is continuous, with $\lim\limits_{s\rightarrow0^+}h(s) = -\sum_{j=1}^{n}4\sqrt{r_j} < 0$ and $\lim\limits_{s\rightarrow\infty}h(s) > 0$, existence of a solution to \eqref{eq:root} is guaranteed. 
Furthermore, this solution will be unique since $h$ is strictly increasing, with $\frac{dh}{ds} > 4$ for all $s>0$. 
  
\subsection*{Proof of Theorem \ref{th:bounds}}
We have
\begin{align*}
	J_{\text{ratio}} &= 2\frac{\Tr((RP)^2 + \frac{1}{2}\Tr(RP)RP)}{[\Tr(RP)]^2} 
	= 2\frac{\Tr((RP)^2)}{[\Tr(RP)]^2} + 1,
\end{align*}
where \eqref{eq:riccatilike} was used in the first equality. 
Let $\lambda = [\lambda_1,...,\lambda_i,...,\lambda_n]^\intercal$ denote the vector of eigenvalues to $RP$. 
Since $R\succ0$, $P\succ0$, we have $\lambda_i > 0, \forall i$. 
Thus $\Tr(RP) = \sum_{i=1}^n\lambda_i = \sum_{i=1}^n|\lambda_i| = \|\lambda\|_1$.
Also, $\Tr((RP)^2) = \sum_{i=1}^n\lambda_i^2 = \|\lambda\|_2^2$, which gives   
\begin{align*}
J_{\text{ratio}} = 1 + 2\frac{\Tr((RP)^2)}{[\Tr(RP)]^2} = 1 + 2\frac{\|\lambda\|_2^2}{\|\lambda\|_1^2}.
\end{align*}
For the case $n=1$, we have $\|\lambda\|_2^2 = \|\lambda\|_1^2$ and thus $J_{\text{ratio}} = 3$. 
When $n\geq 2$, the Cauchy--Schwarz inequality gives
\begin{align}
\frac{1}{n} \leq \frac{\|\lambda\|_2^2}{\|\lambda\|_1^2} < 1,
\label{eq:vector_ineq}
\end{align}
from which \eqref{eq:bounds} follows. 
The lower bound in \eqref{eq:vector_ineq} is attained when $\lambda_i$ are identical for all $i$, while $J_{\text{ratio}}$ approaches the upper bound when $\lambda_i\rightarrow0$ for all but one $i$. 
To show that the same conditions hold for the eigenvalues of $RQ$, we multiply \eqref{eq:riccatilike} with $R$ from the left
and let $RP = S\mathcal{J}S^{-1}$, where $\mathcal{J}$ is the upper triangular Jordan matrix of $RP$, and $S$ is a similarity transform. 
This gives
\begin{align*}
S(\mathcal{J}^2 + \frac{1}{2}\Tr(\mathcal{J})\mathcal{J})S^{-1} = RQ,
\end{align*}
which means that $RQ$ has the same eigenvalues as $\mathcal{J}^2 + \frac{1}{2}\Tr(\mathcal{J})\mathcal{J}$. 
These eigenvalues are given by $\lambda_i^2 + 1/2\sum_{j=1}^{n}\lambda_j\lambda_i$, and are identical when, and only when, $\lambda_i$ are identical for all $i$.
Since $\lambda_i >0$, the eigenvalues also approach zero when, and only when, $\lambda_i\rightarrow0$.
Thus the conditions on $RQ$ are the same as for $RP$.


\bibliographystyle{IEEEtran}
\bibliography{paper}

\begin{thebibliography}{10}
\providecommand{\url}[1]{#1}
\csname url@rmstyle\endcsname
\providecommand{\newblock}{\relax}
\providecommand{\bibinfo}[2]{#2}
\providecommand\BIBentrySTDinterwordspacing{\spaceskip=0pt\relax}
\providecommand\BIBentryALTinterwordstretchfactor{4}
\providecommand\BIBentryALTinterwordspacing{\spaceskip=\fontdimen2\font plus
\BIBentryALTinterwordstretchfactor\fontdimen3\font minus
  \fontdimen4\font\relax}
\providecommand\BIBforeignlanguage[2]{{%
\expandafter\ifx\csname l@#1\endcsname\relax
\typeout{** WARNING: IEEEtran.bst: No hyphenation pattern has been}%
\typeout{** loaded for the language `#1'. Using the pattern for}%
\typeout{** the default language instead.}%
\else
\language=\csname l@#1\endcsname
\fi
#2}}

\bibitem{astwitt97}
K.~J. {\AA}str{\"o}m and B.~Wittenmark, \emph{Computer-Controlled Systems (3rd
  Ed.)}.\hskip 1em plus 0.5em minus 0.4em\relax Upper Saddle River, NJ:
  Prentice-Hall, 1997.

\bibitem{kjbob99}
K.~J. {\AA}str{\"o}m and B.~Bernhardsson, ``Comparison of periodic and event
  based sampling for first-order stochastic systems,'' in \emph{Proc. 14th IFAC
  World Congress}, vol.~11, Beijing, China, 1999, pp. 301--306.

\bibitem{arzen99}
K.-E. {\AA}rz{\'e}n, ``A simple event-based {PID} controller,'' in \emph{Proc.
  14th IFAC World Congress}, vol.~18, Beijing, China, 1999, pp. 423--428.

\bibitem{ast08}
K.~J. {\AA}str{\"o}m, ``{Event Based Control},'' in \emph{Analysis and Design
  of Nonlinear Control Systems: In Honor of Alberto Isidori}, A.~Astolfi and
  L.~Marconi, Eds.\hskip 1em plus 0.5em minus 0.4em\relax Berlin, Heidelberg:
  Springer, 2008, pp. 127--147.

\bibitem{heemels12}
W.~P. M.~H. Heemels, K.~H. Johansson, and P.~Tabuada, ``An introduction to
  event-triggered and self-triggered control,'' in \emph{Proc. 51st IEEE Conf.
  on Decision and Control}, Maui, HW, USA, 2012, pp. 3270--3285.

\bibitem{liu14}
Q.~Liu, Z.~Wang, X.~He, and D.~Zhou, ``A survey of event-based strategies on
  control and estimation,'' \emph{Systems Science \& Control Engineering},
  vol.~2, no.~1, pp. 90--97, 2014.

\bibitem{miskowicz15}
M.~Miskowicz, \emph{Event-Based Control and Signal Processing}, ser. Embedded
  Systems.\hskip 1em plus 0.5em minus 0.4em\relax Boca Raton, FL: CRC Press,
  2015.

\bibitem{antunes12}
D.~Antunes, W.~P. M.~H. Heemels, and P.~Tabuada, ``Dynamic programming
  formulation of periodic event-triggered control: Performance guarantees and
  co-design,'' in \emph{Proc. 51st IEEE Conf. on Decision and Control}, Maui,
  HW, USA, 2012, pp. 7212--7217.

\bibitem{araujo+14}
J.~Araújo, A.~Teixeira, E.~Henriksson, and K.~H. Johansson, ``A down-sampled
  controller to reduce network usage with guaranteed closed-loop performance,''
  in \emph{Proc. 53rd IEEE Conf. on Decision and Control}, Los Angeles, CA,
  USA, 2014, pp. 6849--6856.

\bibitem{antunes16}
D.~J. Antunes and B.~A. Khashooei, ``Consistent event-triggered methods for
  linear quadratic control,'' in \emph{Proc. 55th IEEE Conf. on Decision and
  Control}, Las Vegas, NV, USA, 2016, pp. 1358--1363.

\bibitem{gatsis+16}
K.~Gatsis, A.~Ribeiro, and G.~J. Pappas, ``State-based communication design for
  wireless control systems,'' in \emph{Proc. 55th IEEE Conf. on Decision and
  Control}, Las Vegas, NV, USA, 2016, pp. 129--134.

\bibitem{mir17}
L.~Mirkin, ``Intermittent redesign of analog controllers via the {Youla}
  parameter,'' \emph{IEEE Trans.\ Autom.\ Control}, vol.~62, no.~4, pp.
  1838--1851, 2017.

\bibitem{braksmayer_mir}
M.~Braksmayer and L.~Mirkin, ``{$H_2$} optimization under intermittent sampling
  and its application to event-triggered control,'' in \emph{Proc. 20th IFAC
  World Congress}, Toulouse, France, 2017.

\bibitem{mir_cdc}
A.~Goldenshluger and L.~Mirkin, ``On minimum-variance event-triggered
  control,'' \emph{IEEE Control Systems Lett.}, vol.~1, no.~1, pp. 32--37,
  2017.

\bibitem{meng12}
X.~Meng and T.~Chen, ``Optimal sampling and performance comparison of periodic
  and event based impulse control,'' \emph{IEEE Trans.\ Autom.\ Control},
  vol.~57, no.~12, pp. 3252--3259, 2012.

\bibitem{toivo12}
\BIBentryALTinterwordspacing
T.~Henningsson, ``Stochastic event-based control and estimation,'' Ph.D.
  dissertation, Dept. of Automatic Control, Lund University, Sweden, 2012.
  [Online]. Available:
  \url{http://www.control.lth.se/documents/2012/henn_t2012phd.pdf}
\BIBentrySTDinterwordspacing

\bibitem{zhou96}
K.~Zhou, J.~C. Doyle, and K.~Glover, \emph{{Robust and Optimal Control}}.\hskip
  1em plus 0.5em minus 0.4em\relax Englewood Cliffs, NJ: Prentice-Hall, 1996.

\bibitem{stefan}
J.~{Stefan}, ``{\"Uber einige Probleme der Theorie der W\"armeleitung},''
  \emph{{Sitzungsber. Wiener Akad. Math. Naturwiss. Abt. 2a}}, vol.~98, pp.
  473--484, 1888.

\bibitem{bernauer12}
M.~K. Bernauer and R.~Herzog, ``Implementation of an {X-FEM} solver for the
  classical two-phase {Stefan} problem,'' \emph{Journal of Scientific
  Computing}, vol.~52, no.~2, pp. 271--293, 2012.

\bibitem{barrett14}
J.~W. Barrett, H.~Garcke, and R.~N\"{u}rnberg, ``Phase field models versus
  parametric front tracking methods: Are they accurate and computationally
  efficient?'' \emph{Communications in Computational Physics}, vol.~15, no.~2,
  pp. 506--555, 2014.

\bibitem{recktenwald11}
G.~W. Recktenwald. (2011) Finite-difference approximations to the heat
  equation. [Online]. Available:
  \url{http://citeseerx.ist.psu.edu/viewdoc/summary?doi=10.1.1.408.4054}
  (Accessed 2017-03-18).

\bibitem{jitterbug02}
B.~Lincoln and A.~Cervin, ``Jitterbug: A tool for analysis of real-time control
  performance,'' in \emph{Proc. 41st IEEE Conf. on Decision and Control}, Las
  Vegas, NV, USA, 2002, pp. 1319--1324.

\end{thebibliography}

\end{document}